\newcommand{\V}{\mathcal {V}}
\newcommand{\ES}{\mathscr{S}}
\newcommand{\dr}{\mathrm{dR}}
\newcommand{\sdr}{s_{\dr}}
\newcommand{\Isom}{\mathbf{Isom}}
\newcommand{\Sh}{\mathrm{Sh}}
\newcommand{\Hdr}{\mathrm{H}^1_{\mathrm{dR}}}
\begin{document}
\newtheorem{theorem}[subsection]{Theorem}
\newtheorem{lemma}[subsection]{Lemma}
\newtheorem{proposition}[subsection]{Proposition}
\newtheorem{corollary}[subsection]{Corollary}
\theoremstyle{definition}
\newtheorem{definition}[subsection]{Definition}
\theoremstyle{definition}
\newtheorem{construction}[subsection]{Construction}
\theoremstyle{definition}
\newtheorem{notations}[subsection]{Notations}
\theoremstyle{definition}
\newtheorem{asp}[subsection]{Assumption}
\theoremstyle{definition}
\newtheorem{set}[subsection]{Setting}
\theoremstyle{remark}
\newtheorem{remark}[subsection]{Remark}
\theoremstyle{remark}
\newtheorem{example}[subsection]{Example}

\makeatletter
\newenvironment{subeqn}{\refstepcounter{subsection}
$$}{\leqno{\rm(\thesubsection)}$$\global\@ignoretrue}
\makeatother

\title{Remarks on Ekedahl-Oort stratifications}
\author{Chao Zhang}\date{}
\maketitle \setcounter{section}{-1} \setcounter{tocdepth}{1}
\textbf{Abstract:}
This short paper is a continuation of the author's Ph.D thesis, where Ekedahl-Oort strata are defined and studied for Shimura varieties of Hodge type. The main results here are as follows.

1. The Ekedahl-Oort stratification is independent of the choices of symplectic embeddings.

2. Under certain reasonable assumptions, there is certain functoriality for Ekedahl-Oort stratifications with respect to morphisms of Shimura varieties.

\

\

\section[Introduction]{Introduction}

Let $(G,X)$ be a Shimura datum of Hodge type, and $\Sh_K(G,X)$ be the Shimura variety attached to a compact open subgroup $K\subseteq G(\mathbb{A}_f)$. We assume that $K=K_pK^p$, where $K_p$ is hyperspecial, i.e. there is a reductive group $\mathcal {G}/\mathbb{Z}_p$ such that $\mathcal {G}_{\mathbb{Q}_p}=G_{\mathbb{Q}_p}$ and that $K_p=\mathcal {G}(\mathbb{Z}_p)$. By works of Deligne, $\Sh_K(G,X)$ is defined over a number field $E$. Let $v$ be a place of $E$ over $p$, then Kisin proved in \cite{CIMK} that $\Sh_K(G,X)$ has a smooth model $\ES_K(G,X)$ over $O_{E,(v)}$. Moreover, $\ES_K(G,X)$ is uniquely determined by the Shimura datum in the sense that $\varprojlim_{K^p}\ES_K(G,X)$ satisfies a certern extension property (see \cite{CIMK} 2.3.7 for a precise statement). 

Ekedahl-Oort stratifications for good reductions of Shimura varieties of Hodge type were defined and studied in \cite{EOZ} using \cite{CIMK} and \cite{zipaddi}. Let $\kappa=O_{E,(v)}/(v)$ and $\ES_0$ (resp. $G_0$) be the special fiber of $\ES_K(G,X)$ (resp. $\mathcal {G}$). The Shimura datum determines a cocharacter $\mu:\mathbb{G}_{m,\kappa}\rightarrow G_{0,\kappa}$ which is unique up to $G_{0}(\kappa)$-conjugacy. We constructed in \cite{EOZ} a morphism $\zeta:\ES_0\rightarrow G_0\texttt{-Zip}_\kappa^{\mu}$, where $G_0\texttt{-Zip}_\kappa^{\mu}$ is the stack of $G_0$-zips of type $\mu$ (see \cite{zipaddi} or $\S$ 1.2 of this paper). Fibers of $\zeta$ are Ekedahl-Oort strata. Note that to construct $\zeta$, we need to fix a symplectic embedding. 

There are many basic questions that were not solved in \cite{EOZ}. Here we mention two of them.

1. Whether the Ekedahl-Oort stratification (namely, the morphism $\zeta$) is independent of the choices of symplectic embeddings.

2. How to study behavior of stratifications under morphisms of Shimura varieties.

The motivation for the first question is the observation that both the reduction of the Shimura variety and the ``list'' of Ekedahl-Oort strata are independent of choices of symplectic embeddings. While the second one is a question that can not be more natural. 

The first question is solved by the following theorem. 
\begin{theorem}
The morphism $\zeta$ is uniquely determined by $(G,X)$ and $\mu$, and hence independent of choices of symplectic embeddings.
\end{theorem}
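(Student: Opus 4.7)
Given two symplectic embeddings $\iota_j:(G,X)\hookrightarrow(\mathrm{GSp}(V_j),X_j)$, $j=1,2$, yielding morphisms $\zeta_j:\ES_0\to G_0\texttt{-Zip}^\mu_\kappa$, I would construct a third embedding $\iota$ dominating both and show $\zeta_\iota=\zeta_1=\zeta_2$. For $\iota$, take the sum $\iota_1\oplus\iota_2:G\hookrightarrow\mathrm{GSp}(V_1\oplus V_2)$ with the direct sum of the symplectic forms and of $\mathcal{G}$-stable lattices $\Lambda_1\oplus\Lambda_2$. Both $\iota_j$ have the same similitude character (namely the weight cocharacter of $(G,X)$), so the image lies in the symplectic similitude group; the universal abelian scheme is $\mathcal{A}_1\times\mathcal{A}_2$, and by Kisin's uniqueness of the integral canonical model the scheme $\ES_0$ is common to the three embeddings.

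The Hodge tensors cutting out $G$ in $\mathrm{GL}(V_1\oplus V_2)$ may be taken to include the block projectors $\pi_1,\pi_2\in\mathrm{End}(V_1\oplus V_2)$, the tensors $s^{(j)}_\alpha\in V_j^\otimes$ for each $\iota_j$, and additional $G$-invariants in the Tannakian category generated by $V_1$ and $V_2$, which cut the group down from $G\times G$ to the diagonal $G$. Because the Hodge filtration and the crystalline Frobenius on $\Hdr(\mathcal{A}_1\times\mathcal{A}_2)$ respect the direct sum decomposition, preservation of $\pi_1,\pi_2$ identifies the $G$-torsor $I_\iota(x)$ at a geometric point $x$ with a subscheme of $I_1(x)\times I_2(x)$. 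The projection $I_\iota(x)\to I_1(x)$ is a $G$-equivariant map of $G$-torsors, hence an isomorphism, and it carries the parabolic subtorsors arising from the Hodge and conjugate filtrations, together with the $\sigma$-linear identification of associated gradeds, of $\zeta_\iota(x)$ to those of $\zeta_1(x)$. Hence $\zeta_\iota=\zeta_1$ on geometric points; since $\ES_0$ is reduced and $G_0\texttt{-Zip}^\mu_\kappa$ is a smooth stack of finite type, the two morphisms coincide. By symmetry $\zeta_\iota=\zeta_2$, and the theorem follows.

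The principal technical obstacle is the compatibility step: showing that the construction of $\zeta$ in \cite{EOZ}, which depends on a specific choice of crystalline realizations of the Hodge tensors, is functorial under direct sum of abelian schemes. One must check that the lifts of $\pi_j$ and $s^{(j)}_\alpha$ to the relative crystalline cohomology (or equivalently, to the evaluation of the associated crystal at $\ES_0$) match the evident lifts coming from the decomposition of $\Hdr(\mathcal{A}_1\times\mathcal{A}_2)$ into de Rham cohomologies of the factors, so that the identification of $I_\iota$ with the pullback of $I_1$ is compatible with the zip structure. Once this functoriality is established, the remainder of the argument reduces to a formal comparison of torsors and parabolic filtrations.
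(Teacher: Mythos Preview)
Your proposal is correct and follows essentially the same route as the paper: construct a dominating symplectic embedding via the direct sum $V_1\oplus V_2$, cut out $\mathcal{G}$ inside $\mathrm{GL}(L_1\oplus L_2)$ using the block projectors together with the original tensors $s^{(j)}$ and an extra tensor for the diagonal, and then compare $G_0$-zips via the restriction (projection) map $I\to I_1$, checking compatibility with the parabolic subtorsors and with $\iota$.

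Two small remarks. First, the similitude characters of $\iota_1$ and $\iota_2$ need not literally coincide on $G$; the paper rescales them so that $G$ acts on $\psi_1$ and $\psi_2$ through the same character before forming the direct sum. Second, the ``principal technical obstacle'' you flag---that the de~Rham realizations of the projectors $\pi_j$ really give back the K\"unneth summands $\Hdr(\mathcal{A}_j)\subseteq\Hdr(\mathcal{A}_1\times\mathcal{A}_2)$---is exactly what the paper isolates as a separate lemma: one checks equality on the generic fibre (ultimately over $\mathbb{C}$ via the de~Rham isomorphism) and then uses that $\ES_K(G,X)$ is reduced and the Grassmannian is separated. Once that is in hand, the projection $\epsilon$ is a global $G_0$-equivariant map of torsors, so the comparison can be carried out scheme-theoretically rather than pointwise; your appeal to geometric points plus reducedness is then unnecessary, though not wrong.
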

The first section is devoted to a proof of the above result. 

The second question is too general and too inexplicit to study, so we raise the following question. Let $f:(G,X)\rightarrow (G',X')$ be a morphism of Shimura data of Hodge type. Let $E$ and $E'$ be their reflex fields. Then $E\supseteq E'$. Let $K\subseteq G(\mathbb{A}_f)$ and $K'\subseteq G'(\mathbb{A}_f)$ be such that $K_p$ and $K'_p$ are hyperspecial. Assume that $f(K)\subseteq K'$, then there is a morphism $f: \Sh_K(G,X)\rightarrow \Sh_{K'}(G',X')_E$. Let $v'$ be a place of $E'$ over $p$ with residue field $\kappa'$ and $v$ be a place of $E$ over $v'$ with residue field $\kappa$, then there is a morphism $\ES_K(G,X)\rightarrow \ES_{K'}(G',X')_{O_{E,(v)}}$ extending $f$. Still write $f$ for the morphism on special fibers $\ES_{0,K}(G,X)\rightarrow \ES_{0,K'}(G',X')_{\kappa}$. Let $G_0$ (resp. $G'_0$) be the reduction of $G$ (resp. $G'$), and let $\mu$ (resp. $\mu'$) be the cocharacter unique up to conjugacy. Then there is a morphism $\zeta:\ES_{0,K}(G,X)\rightarrow G_0\texttt{-Zip}_\kappa^{\mu}$ (resp. $\zeta':\ES_{0,K'}(G',X')\rightarrow G'_0\texttt{-Zip}_{\kappa'}^{\mu'}$) giving the Ekedahl-Oort strata on $\ES_{0,K}(G,X)$ (resp. $\ES_{0,K'}(G',X')$).

The question is, whether there is any compatibility between $f$, $\zeta$ and $\zeta'$. We have the following result.
\begin{theorem}
Assume that $f_{\mathbb{Q}_p}$ extends to a morphism of reductive group schemes over $\mathbb{Z}_p$, then there is a canonical commutative diagram $$\xymatrix{
\ES_{0,K}(G,X)\ar[d]^{\zeta}\ar[r]^{f}& \ES_{0,K'}(G',X')_\kappa\ar[d]^{\zeta'\otimes\kappa} \\
G_0\mathrm{-zip}^\mu_\kappa\ar[r]^{\alpha} & G'_0\mathrm{-zip}^{\mu'}_{\kappa'}\otimes \kappa.
}$$
\end{theorem}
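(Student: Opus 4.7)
The plan is to construct the morphism $\alpha$ of zip stacks from the assumed $\mathbb{Z}_p$-lift of $f$, and then to check commutativity of the diagram by choosing symplectic embeddings compatibly on both sides, which is made possible by the first theorem.

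The construction of $\alpha$ is essentially formal. Reducing $f\colon\mathcal{G}\to\mathcal{G}'$ modulo $p$ gives a morphism $G_0\to G'_0\otimes_{\kappa'}\kappa$, and up to $G_0(\kappa)$-conjugacy we may arrange that $\mu$ maps to the base change of $\mu'$. This ensures that the parabolic subgroups $P_\pm$ and Levi subgroups $L_\pm$ attached to $\mu$ are sent into their primed analogues. A $G_0$-zip $(I,I_+,I_-,\iota)$ of type $\mu$ over a $\kappa$-scheme $T$ is then pushed out along these maps to a $G'_0$-zip of type $\mu'$, giving the desired morphism of stacks $\alpha$, functorial in $T$.

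For the commutativity of the diagram, I use the first theorem to reduce to convenient symplectic embeddings. Choose a Hodge embedding $(G',X')\hookrightarrow(GSp(V'),S'^{\pm})$ with $\mathcal{G}'$-stable self-dual lattice $\Lambda'\subset V'$. Composition with $f$ yields a representation of $\mathcal{G}$ on $\Lambda'$, which is typically not faithful; adjoin an auxiliary symplectic $\mathcal{G}$-representation $(\Lambda'',\psi'')$ making the action on $\Lambda:=\Lambda'\oplus\Lambda''$ faithful, and equip $\Lambda$ with the orthogonal direct sum form, to obtain a Hodge embedding $(G,X)\hookrightarrow(GSp(V),S^{\pm})$ that projects compatibly onto the chosen embedding of $(G',X')$. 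The universal abelian scheme $A$ on $\ES_K(G,X)$ then decomposes up to isogeny as $A'\times A''$, with $A'$ the pull-back under $f$ of the universal abelian scheme on $\ES_{K'}(G',X')$ and $A''$ coming from $\Lambda''$. Accordingly $\Hdr(A/\ES_0)$ splits as $\Hdr(A'/\ES_0)\oplus\Hdr(A''/\ES_0)$ compatibly with Hodge filtration, conjugate filtration and Cartier isomorphism, while the $\mathcal{G}'$-Hodge tensors appear as a distinguished subset of the $\mathcal{G}$-Hodge tensors. It follows that the $G_0$-torsor of tensor-preserving trivialisations of $\Hdr(A/\ES_0)$ underlying $\zeta$, pushed out along $G_0\to G'_0$, is canonically isomorphic to the pull-back under $f$ of the $G'_0$-torsor underlying $\zeta'$; the same holds for the $P_\pm$- and $L_\pm$-torsors and for the connecting isomorphism $\iota$, giving the required canonical identification $\alpha\circ\zeta\simeq(\zeta'\otimes\kappa)\circ f$.

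The principal obstacle is producing the auxiliary representation $\Lambda''$ together with a $\mathcal{G}$-equivariant polarisation defined over $\mathbb{Z}_p$, so that the resulting Hodge embedding for $(G,X)$ restricts correctly at the hyperspecial level $K_p$ and meshes with the chosen Hodge embedding for $(G',X')$ at $K'_p$. The hypothesis that $f_{\mathbb{Q}_p}$ extends to a morphism of reductive group schemes over $\mathbb{Z}_p$ is exactly what permits the simultaneous transport of integral structures and tensors on both sides; without it one could arrange compatibility rationally but not necessarily integrally. Once the compatible integral symplectic embeddings are in hand, the verification is a mostly mechanical unravelling of the constructions in \cite{EOZ} and the functoriality of the zip formalism in \cite{zipaddi}.
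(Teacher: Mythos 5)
Your overall route is the same as the paper's: define $\alpha$ by pushing out the torsors along the reduction of the $\mathbb{Z}_p$-lift of $f$, then use the independence-of-embedding theorem to replace the given symplectic embedding of $(G,X)$ by one built as a direct sum containing the $(G',X')$-embedding composed with $f$, and compare the two zips by restricting trivializations to the corresponding summand. But there is a genuine gap exactly where you flag ``the principal obstacle'': you never produce the auxiliary faithful symplectic $\mathcal{G}$-representation $\Lambda''$, nor verify that the resulting direct-sum embedding is adapted to the same hyperspecial level $K_p$. The paper's resolution is simple but essential: take $\Lambda''$ to be a lattice $L$ arising from \emph{any} symplectic embedding $i$ of $(G,X)$ itself (such an $i$ exists because $(G,X)$ is of Hodge type, and the lattice exists by Kisin's Lemma 2.3.1), rescale the two similitude characters by the weights of the weight cocharacter as in \S\ref{sum of ebd} so that $G$ acts on $\psi\oplus\psi'$ through a single character, and then --- this is the decisive use of Assumption \ref{nece asp} --- show by flat base change that the Zariski closure $\mathcal{G}_1$ of $G$ in $\mathrm{GL}(L_{\mathbb{Z}_{(p)}}\oplus L'_{\mathbb{Z}_{(p)}})$ satisfies that $\mathcal{G}_{1,\mathbb{Z}_p}$ is the graph of $\underline{f}$, hence $\mathcal{G}_1\cong\mathcal{G}$ is reductive and $\underline{f}$ is defined over $\mathbb{Z}_{(p)}$ (Step 2 of the paper's proof). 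Without this verification the direct-sum embedding need not be adapted to $K_p$, the integral model and torsor constructions do not apply to it, and Theorem \ref{inpe of symp} cannot be invoked; so the step you defer is not a technicality but the precise point where the hypothesis on $f_{\mathbb{Q}_p}$ is consumed, and your proposal leaves it unproved.

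A second, smaller, inaccuracy: you claim that ``up to $G_0(\kappa)$-conjugacy we may arrange that $\mu$ maps to the base change of $\mu'$''. In general $f\circ\mu$ and $\mu'\otimes\kappa$ are conjugate only under $G'_0(\kappa)$, and the conjugating element need not lie in the image of $G_0$; the paper therefore conjugates on the $G'_0$-side via $\alpha_3^{g}$ and must check (Remark \ref{unique alpha 3}) that the resulting identification does not depend on the choice of $g$, two such choices differing by an element of the Levi $L'$. That check is what makes the diagram \emph{canonical}, as asserted in the statement, and your sketch of $\alpha$ omits it.
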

The proof to the above result will be given in the second section. 

As we have seen, the second question is not yet totally solved. More questions will be raised and studied in the author's future research.

\section[Independence of symplectic embeddings]{Independence of symplectic embeddings}

Let $(G,X)$ be a Shimura datum of Hodge type with good reduction at a prime $p>2$. Let $E$ be the reflex field and $v$ be a place of $E$ over $p$. The residue field at $v$ will be denoted by $\kappa$. Let $K_p\subseteq G(\mathbb{Q}_p)$ be a hyperspecial subgroup, and $K^p\subseteq G(\mathbb{A}_f^p)$ be a compact open subgroup which is small enough. Let $K$ be $K_p\times K^p$. Then by \cite{CIMK}, the Shimura varieties $\Sh_K(G,X)$ has an integral canonical model $\ES_K(G,X)$ which is smooth over~$O_{E,(v)}$.

Let $\ES_0$ be the special fiber of $\ES_K(G,X)$. By the main results of \cite{EOZ}, there is a theory of Ekedahl-Oort stratification on $\ES_0$. To define the stratification, we need to fix a symplectic embedding, while the variety $\ES_0$ is independent of symplectic embeddings. A natural question is whether different symplectic embeddings give the same stratification.

The above question is not yet precise enough to work with. Let us first recall how Ekedahl-Oort stratifications are defined and raise precise questions.

\subsection[Integral canonical models]{Integral canonical models}\label{intcanmod}

Let $K_p$ and $G$ be as above, then by \cite{INTV} Proposition 3.1.2.1 c) and e), $G_{\mathbb{Q}_p}$ extends uniquely to a reductive group $\widehat{\mathcal {G}}/\mathbb{Z}_{p}$ such that $K_p=\widehat{\mathcal {G}}(\mathbb{Z}_{p})$. More precisely, for any two extensions $e_1:G_{\mathbb{Q}_p}\rightarrow \widehat{\mathcal {G}}_1$ and $e_2:G_{\mathbb{Q}_p}\rightarrow \widehat{\mathcal {G}}_2$ such that $e_1(K_p)=\widehat{\mathcal {G}}_1(\mathbb{Z}_p)$ and $e_2(K_p)=\widehat{\mathcal {G}}_2(\mathbb{Z}_p)$, there is an unique isomorphism $f:\widehat{\mathcal {G}}_1\rightarrow \widehat{\mathcal {G}}_2$ such that $f\circ e_1=e_2$ and that $f(\widehat{\mathcal {G}}_1(\mathbb{Z}_p))=\widehat{\mathcal {G}}_2(\mathbb{Z}_p)$.

Let $i:(G,X)\hookrightarrow (\mathrm{GSp}(V,\psi),X')$ be a symplectic embedding. Then by \cite{CIMK} Lemma 2.3.1, there exists a $\mathbb{Z}_p$-lattice $L_{\mathbb{Z}_p}\subseteq V_{\mathbb{Q}_p}$, such that $i_{\mathbb{Q}_p}:G_{\mathbb{Q}_p}\subseteq\mathrm{GL}(V_{\mathbb{Q}_p})$ extends uniquely to a closed embedding $\widehat{\mathcal {G}}\hookrightarrow \mathrm{GL}(L_{\mathbb{Z}_p}).$ So there is a $\mathbb{Z}$-lattice $L\subseteq V$ such that $\mathcal {G}$, the Zariski closure of $G$ in $\mathrm{GL}(L_{\mathbb{Z}_{(p)}})$, is reductive, as the base change to $\mathbb{Z}_p$ of $\mathcal {G}$ is $\widehat{\mathcal {G}}$. Moreover, we can assume $L$ is such that $L^\vee\supseteq L$. Let $d=|L^\vee/L|$ and $g=\frac{1}{2}\mathrm{dim}(V)$, then the integral canonical model $\ES_K(G,X)$ of $\mathrm{Sh}_K(G,X)$ is constructed as follows. We can choose $K'\subseteq\mathrm{GSp}(V,\psi)(\mathbb{A}_f)$ small enough such that $K'\supseteq K$ and that $\mathrm{Sh}_{K'}(\mathrm{GSp}(V,\psi),X)$ affords a moduli interpretation. There is a finite morphism $f:\mathrm{Sh}_K(G,X)\rightarrow \mathrm{Sh}_{K'}(\mathrm{GSp}(V,\psi),X)_E$. Let $\mathscr{A}_{g,d,K'/\mathbb{Z}_{(p)}}$ be the moduli scheme of abelian schemes over $\mathbb{Z}_{(p)}$-schemes with a polarization of degree $d$ and level $K'$ structure. Then $\mathscr{A}_{g,d,K'/\mathbb{Z}_{(p)}}\otimes \mathbb{Q}=\mathrm{Sh}_{K'}(\mathrm{GSp}(V,\psi),X)$. The integral canonical model $\ES_K(G,X)$ is the normalization of the Zariski closure of $\mathrm{Sh}_K(G,X)$ in $\mathscr{A}_{g,d,K'/\mathbb{Z}_{(p)}}\otimes O_{E,(v)}$. Here the word ``normalization'' make sense. As $\Sh_K(G,X)$ is regular, and on each open affine, $O_{\ES_K(G,X)}$ is obtain by taking elements in $O_{\Sh_K(G,X)}$ that is integral over $O_{\mathscr{A}_{g,d,K'/O_{E,(v)}}}$.

Note that we didn't assume that $K'$ is such that the morphism $f$ is a closed embedding. Because if we take $K''\subseteq K'$ small enough such that the induced morphism $$g:\mathrm{Sh}_K(G,X)\rightarrow \mathrm{Sh}_{K''}(\mathrm{GSp}(V,\psi),X)_E$$ is a closed embedding, then $f$ factors through $g$. The natural morphism $\mathscr{A}_{g,d,K''/\mathbb{Z}_{(p)}}\rightarrow \mathscr{A}_{g,d,K'/\mathbb{Z}_{(p)}}$ is finite, so the normalization gives the same $\ES_K(G,X)$. The special fiber of $\ES_K(G,X)$ will be denoted by $\ES_0$.

\subsection[$G$-zips]{$G$-zips}\label{G-zips}

Let $G_0$ (resp. $L_0$) be the special fiber of $\mathcal {G}$ (resp. $L_{\mathbb{Z}_p}$). We remark that $G_0$ is uniquely determined by $(G,K_p)$, as it is also the special fiber $\widehat{\mathcal {G}}$ which is uniquely determined by $(G,K_p)$. But $L_0$ is not unique, there might be many choices. By \cite{EOZ}, the Shimura datum $(G,X)$ determines a cocharacter $\mu:\mathbb{G}_{m,W(\kappa)}\rightarrow \widehat{\mathcal {G}}_{W(\kappa)}$ which is unique up to $\widehat{\mathcal {G}}(W(\kappa))$-conjugacy. The special fiber of $\mu$ will still be denoted by $\mu$.

\begin{set}\label{set G-zip}
We start with $G_0$ and $\mu:\mathbb{G}_{m,\kappa}\rightarrow G_{0,\kappa}$. For an $\mathbb{F}_p$-scheme $S$, let $\sigma:S\rightarrow S$ be the absolute Frobenius. For an $S$-scheme $T$, we will write $T^{(p)}$ for the pull back of $T$ via $\sigma$. In particular, we will write $\mu^{(p)}$ for the pull back via Frobenius of $\mu$. Note that it is a cocharacter of $G_{0,\kappa}$.

Let $P_{+}$ (resp. $P_{-}$) be the unique parabolic subgroup of
$G_{0,\kappa}$ such that its Lie algebra is the sum of spaces with
non-negative weights (resp. non-positive weights) in
$\text{Lie}(G_{0,\kappa})$ under $\text{Ad}\circ\mu$.

Let $U_{+}$ (resp. $U_{-}$) be the unipotent radical of $P_{+}$
(resp. $P_{-}$), and $L$ be the common Levi subgroup of $P_{+}$ and
$P_{-}$. Note that $L$ is also the centralizer of
$\mu$.
\end{set}

\begin{definition}\label{G-ziptypekappa}
Let $S$ be a scheme over $\kappa$. A $G_0$-zip of type
$\mu$ over $S$ is a tuple $\underline{I}=(I, I_+, I_-, \iota)$
consisting of a right $G_\kappa$-torsor $I$ over~$S$, a right
$P_+$-torsor $I_+\subseteq I$, a right $P_-^{(p)}$-torsor
$I_-\subseteq I$, and an isomorphism of $L^{(p)}$-torsors
$\iota:I^{(p)}_+/U^{(p)}_+\rightarrow I_-/U^{(p)}_-$.

A morphism $(I, I_+, I_-, \iota)\rightarrow (I', I'_+, I'_-,
\iota')$ of $G_0$-zips of type $\mu$ over $S$ consists of equivariant
morphisms $I\rightarrow I'$ and $I_{\pm}\rightarrow I'_{\pm}$ that
are compatible with inclusions and the isomorphisms $\iota$
and~$\iota'$.
\end{definition}

The category of $G_0$-zips of type $\mu$ over a $\kappa$-scheme $S$
will be denoted by $G_0\texttt{-Zip}_\kappa^{\mu}(S)$. They form a
fibered category $G_0\texttt{-Zip}_\kappa^{\mu}$ over the category of
$\kappa$-schemes if we only consider isomorphisms as morphisms.

Pink, Wedhorn and Ziegler proved the following result.
\begin{theorem}
The fibered category $G_0\texttt{-Zip}_\kappa^{\mu}$ is a smooth
algebraic stack over $\kappa$ of dimension 0.
\end{theorem}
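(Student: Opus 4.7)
The plan is to realize $G_0\texttt{-Zip}_\kappa^\mu$ as a quotient stack $[E_{G_0,\mu}\backslash G_{0,\kappa}]$ for a smooth affine $\kappa$-group $E_{G_0,\mu}$ of the same dimension as $G_{0,\kappa}$; smoothness and dimension zero will then follow formally from general properties of quotient stacks.

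First, I would introduce the \emph{zip group}
$$E_{G_0,\mu}=\{(p_+,p_-)\in P_+\times P_-^{(p)}\mid \sigma(\overline{p}_+)=\overline{p}_-\},$$
where $\overline{p}_+\in L$ and $\overline{p}_-\in L^{(p)}$ denote the Levi components under the projections $P_+\twoheadrightarrow L$ and $P_-^{(p)}\twoheadrightarrow L^{(p)}$, and $\sigma\colon L\to L^{(p)}$ is the relative Frobenius. As an extension of the graph of $\sigma$ on $L$ by $U_+\times U_-^{(p)}$, the group $E_{G_0,\mu}$ is smooth of dimension $\dim L+\dim U_++\dim U_-=\dim G_{0,\kappa}$. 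Let it act on $G_{0,\kappa}$ by $(p_+,p_-)\cdot g=p_+gp_-^{-1}$.

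Second, I would construct an equivalence of fibered categories $[E_{G_0,\mu}\backslash G_{0,\kappa}]\xrightarrow{\sim}G_0\texttt{-Zip}_\kappa^\mu$ over $\kappa$. From an $S$-point $g\in G_{0,\kappa}(S)$ one assigns the zip $\underline{I}(g)=(I,I_+,I_-,\iota)$ with $I=G_{0,S}$ viewed as a trivial right torsor, $I_+=P_{+,S}\subseteq I$, $I_-=gP_{-,S}^{(p)}\subseteq I$, and $\iota$ induced by left multiplication by $g$ on the $L^{(p)}$-quotient; one checks directly that replacing $g$ by $p_+gp_-^{-1}$ yields an isomorphic zip, so this descends to a morphism from the quotient. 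For the converse, given any $G_0$-zip on $S$, étale-locally trivializing $I$ so that $I_+$ becomes $P_+$ and then using $\iota$ to identify $I_-$ with a coset of $P_-^{(p)}$ produces a canonical $g\in G_{0,\kappa}$ well-defined up to the $E_{G_0,\mu}$-action.

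Finally, since $E_{G_0,\mu}$ is smooth and acts on the smooth scheme $G_{0,\kappa}$, the quotient $[E_{G_0,\mu}\backslash G_{0,\kappa}]$ is a smooth algebraic stack of dimension $\dim G_{0,\kappa}-\dim E_{G_0,\mu}=0$, yielding the theorem. The main obstacle is the equivalence of stacks: one must verify that every $G_0$-zip étale-locally admits simultaneous trivializations of $I$, $I_+$, and $I_-$ that are compatible with $\iota$. This is the only nonformal point; it is handled by first trivializing $I_+$ étale-locally (since $P_+$-torsors are locally trivial), pushing forward via Frobenius to see that the $L^{(p)}$-torsor $I_+^{(p)}/U_+^{(p)}$ becomes trivial, transporting this trivialization to $I_-/U_-^{(p)}$ through $\iota$, and finally lifting to a trivialization of the $P_-^{(p)}$-torsor $I_-$ using smoothness of $U_-^{(p)}$.
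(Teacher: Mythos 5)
Your proposal is correct, and it is essentially the standard argument: you have reconstructed the proof of the result the paper simply cites (Pink--Wedhorn--Ziegler, \emph{$F$-zips with additional structure}, Corollary 3.12), namely the presentation of $G_0\texttt{-Zip}_\kappa^{\mu}$ as the quotient stack of $G_{0,\kappa}$ by the zip group $E_{G_0,\mu}$ acting via $(p_+,p_-)\cdot g=p_+gp_-^{-1}$, from which smoothness and dimension $\dim G_{0,\kappa}-\dim E_{G_0,\mu}=0$ follow. The paper itself offers no independent proof, so there is nothing further to compare; your handling of the only delicate point (étale-local trivialization of a zip compatibly with $\iota$, lifting through $U_-^{(p)}$ by smoothness) matches the cited source.
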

\begin{proof}
This is \cite{zipaddi} Corollary 3.12.
\end{proof}

\subsection[Ekedahl-Oort strata]{Ekedahl-Oort strata}\label{EOstrata}

Now we will explain how to construct Ekedahl-Oort stratification follow \cite{EOZ}. Note that we will NOT follow \cite{EOZ} strictly, as it seems more natural to compare $L^\vee$ with cohomologies, see also \cite{LRKisin} and \cite{muordinary}. Our theory of Ekedahl-Oort stratification is base on the theory of $G_0$-zips of type $\mu$ defined and studied by Pink, Wedhorn and Ziegler in \cite{zipaddi}.

Let $\mathcal {A}$ be the pull back to $\ES_K(G,X)$ of the universal abelian scheme on $\mathscr{A}_{g,d,K'/\mathbb{Z}_{(p)}}$, and $\mathcal {V}$ be $\Hdr(\mathcal {A}/\ES_K(G,X))$. Let $L\subseteq V$ and $\mathcal {G}$ be as in \ref{intcanmod}. Then by \cite{CIMK} Proposition 1.3.2, there is a tensor $s\in L_{\mathbb{Z}_{(p)}}^\otimes$ defining $\mathcal {G}\subseteq \mathrm{GL}(L_{\mathbb{Z}_{(p)}}).$ Corollary 2.3.9 of \cite{CIMK} implies that the tensor $s\in L_{\mathbb{Z}_{(p)}}^\otimes$ induces a section $\sdr\in \mathcal {V}^\otimes$. By \cite{EOZ} Lemma 2.3.2 1), the scheme $$I=\Isom_{\ES_K(G,X)}\big((L^\vee_{\mathbb{Z}_{(p)}},s)\otimes O_{\ES_K(G,X)},(\mathcal {V}, \sdr)\big)$$ is a right $\mathcal {G}$-torsor.

The first main result of \cite{EOZ} is as follows.
\begin{set}\label{F-zip attached to abs}
Still write $\V$, $s$, $\sdr$ and $I$ the reduction mod $p$ of $\V$, $s$, $\sdr$ and $I$. Let $F:\V^{(p)}\rightarrow \V$ and $V:\V\rightarrow \V^{(p)}$ be the Frobenius and Verschiebung on $\V$ respectively. Let $\delta:\V\rightarrow \V^{(p)}$ be the semi-linear map sending $v$ to $v\otimes 1$. Then we have a semi-linear map $F\circ \delta:\V\rightarrow \V$. There is a descending filtration $\V\supseteq \mathrm{ker}(F\circ \delta)\supseteq 0$ and an ascending filtration $0\subseteq \mathrm{im}(F)\subseteq \V.$ The morphism $V$ induces an isomorphism $\V/\mathrm{im}(F)\rightarrow \mathrm{ker}(F)$ whose inverse will be denoted by $V^{-1}$. Then $F$ and $V^{-1}$ induce isomorphisms $$\varphi_0:(\V/\mathrm{ker}(F\circ \delta))^{(p)}\rightarrow \mathrm{ker}(F)$$
and $$\varphi_1:(\mathrm{ker}(F\circ \delta))^{(p)}\rightarrow \V/(\mathrm{im}(F)).$$
\end{set}

\begin{set}\label{phi in standard G-zip}
Let $L_0$ be the special fiber of $L_{\mathbb{Z}_{(p)}}$. The cocharacter $$\mu:\mathbb{G}_{m,\kappa}\rightarrow G_{0,\kappa}\subseteq \mathrm{GL}(L_{0,\kappa})\cong\mathrm{GL}(L^\vee_{0,\kappa})$$ induces an $F$-zip structure on $L^\vee_{0,\kappa}$ as follows. Let $(L^\vee_{0,\kappa})^0$ (resp. $(L^\vee_{0,\kappa})^1$) be the subspace of $L^\vee_{0,\kappa}$ of weight $0$ (resp. 1) with respect to $\mu$, and $(L^\vee_{0,\kappa})_0$ (resp. $(L^\vee_{0,\kappa})_1$) be the subspace of $L^\vee_{0,\kappa}$ of weight $0$ (resp. 1) with respect to $\mu^{(p)}$. Then we have a descending filtration
$L^\vee_{0,\kappa}\supseteq (L^\vee_{0,\kappa})^1\supseteq 0$ and an ascending filtration $0\subseteq (L^\vee_{0,\kappa})_0\subseteq L^\vee_{0,\kappa}$. Let $\xi:L^\vee_{0,\kappa}\rightarrow (L^\vee_{0,\kappa})^{(p)}$ be the isomorphism given by $l\otimes k\mapsto l\otimes 1\otimes k$, $\forall\ l\in L_0^\vee$, $\forall\ k\in \kappa$. Then $\xi$ induces isomorphisms

$$\phi_0:(L^\vee_{0,\kappa})^{(p)}/((L^\vee_{0,\kappa})^1)^{(p)}\stackrel{\mathrm{pr}_2}{\rightarrow}((L^\vee_{0,\kappa})^0)^{(p)} \stackrel{\xi^{-1}}{\longrightarrow}(L^\vee_{0,\kappa})_0$$ and
$$\phi_1:((L^\vee_{0,\kappa})^1)^{(p)}\stackrel{\xi^{-1}}{\longrightarrow}((L^\vee_{0,\kappa})_1\simeq L^\vee_{0,\kappa}/(L^\vee_{0,\kappa})_0.$$
\end{set}

\begin{theorem}\label{G-zipES_0}

\

1) Let $I_+\subseteq I$ be the closed subscheme
$$I_+:=\Isom_{\ES_0}\big((L_{0,\kappa}^\vee, s, (L^\vee_{0,\kappa})^1)\otimes O_{\ES_0},\ (\V,
\sdr, \mathrm{ker}(F\circ \delta))\big).$$Then $I_+$ is a $P_+$-torsor over $\ES_0$.

2) Let $I_-\subseteq I$ be the closed subscheme
$$I_-:=\Isom_{\ES_0}\big((L_{0,\kappa}^\vee, s, (L^\vee_{0,\kappa})_0)\otimes O_{\ES_0},\ (\V,
\sdr, \mathrm{im}(F))\big).$$Then $I_-$ is a $P_-^{(p)}$-torsor over
$\ES_0$.

3) Let $\iota:I_+^{(p)}/U_+^{(p)}\rightarrow I_-/U_-^{(p)}$ be the morphism
induced by
\begin{equation*}
\begin{split}
 I_+^{(p)}&\rightarrow I_-/U_-^{(p)}\\
f&\mapsto (\varphi_0\oplus \varphi_1)\circ\mathrm{gr}(f)\circ(\phi_0^{-1}\oplus \phi_1^{-1}), \forall\ S/\ES_0\text{ and }\forall\ f\in I_+^{(p)}(S).
 \end{split}
 \end{equation*}
Then $\iota$ is an isomorphism of $L^{(p)}$-torsors.

Hence the tuple $(I,I_+,I_-,\iota)$ is a $G_0$-zip of type $\mu$ over $\ES_0$.
\end{theorem}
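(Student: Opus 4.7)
The plan is to verify 1)--3) by working \'etale-locally on $\ES_0$. Since $I$ is a right $\mathcal{G}$-torsor, it is \'etale-locally trivial; after choosing a local section we may identify $(\V,\sdr)$ with $(L^\vee_{0,\kappa},s)\otimes O_{\ES_0}$. Under this identification $I_+$ becomes the $O_{\ES_0}$-scheme of automorphisms of $(L^\vee_{0,\kappa},s)\otimes O_{\ES_0}$ sending the constant sub-bundle $(L^\vee_{0,\kappa})^1\otimes O_{\ES_0}$ to $\mathrm{ker}(F\circ\delta)\subseteq \V$, and similarly $I_-$ swaps $(L^\vee_{0,\kappa})_0\otimes O_{\ES_0}$ with $\mathrm{im}(F)$.

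For 1) and 2), the key input I plan to establish is a ``filtration type'' assertion: after such a local trivialization, the sub-bundles $\mathrm{ker}(F\circ\delta)$ and $\mathrm{im}(F)$ of $L^\vee_{0,\kappa}\otimes O_{\ES_0}$ are, at each geometric point, $G_0$-conjugate respectively to $(L^\vee_{0,\kappa})^1$ and to $(L^\vee_{0,\kappa})_0$. This is the crystalline input from Kisin's work \cite{CIMK}: every geometric point of $\ES_0$ lifts to a characteristic-zero point of $\ES_K(G,X)$ at which the Hodge filtration is cut out by $\mu$ and $\sdr$ is parallel to $s$; reducing modulo $p$, the Hodge filtration and its Frobenius pullback acquire the required types. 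Since the stabilizers of the standard filtrations in $G_0$ are precisely $P_+$ and $P_-^{(p)}$, this forces $I_+$ and $I_-$ to be torsors under these groups.

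For 3), granted 1) and 2), the verification of $\iota$ is formal. Given $f \in I_+^{(p)}(S)$ for an $\ES_0$-scheme $S$, $\mathrm{gr}(f)$ is an isomorphism of the associated gradeds, and sandwiching with $(\varphi_0\oplus\varphi_1)$ and $(\phi_0^{-1}\oplus\phi_1^{-1})$ produces an $S$-point of $I_-/U_-^{(p)}$. One has to check independence of the representative of $f$ modulo $U_+^{(p)}$ (immediate since $U_+^{(p)}$ acts trivially on the associated graded), compatibility with $\sdr$ (since $\varphi_0,\varphi_1,\phi_0,\phi_1$ arise from Frobenius-type maps respecting tensors), and $L^{(p)}$-equivariance --- all direct.

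The hard part is the filtration-type assertion underlying 1) and 2): $\mathrm{ker}(F\circ\delta)$ is defined in terms of the de Rham cohomology and its Frobenius, whereas $(L^\vee_{0,\kappa})^1$ is defined in terms of the cocharacter $\mu$. Bridging the two --- showing that the parallelism of $\sdr$ with $s$ forces the Hodge filtration to lie in the $G_0$-conjugacy class of $(L^\vee_{0,\kappa})^1$ at every geometric point --- is the technical heart of the theorem and depends crucially on Kisin's integral model theory together with the crystalline comparison (cf.\ \cite{CIMK} 1.5).
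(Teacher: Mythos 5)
The paper itself does not reprove this theorem: its proof is the one-line citation to \cite{EOZ} Theorem 2.4.1, and your outline does follow the same general strategy as that reference (trivialize via the torsor $I$, control the two filtrations by Kisin's crystalline results, then define $\iota$ through the graded Frobenius maps). But as a proof your sketch has genuine gaps exactly at the points that carry the content. The main one is the passage from your pointwise ``filtration type'' assertion to parts 1) and 2): knowing that at every geometric point the fibres of $\ker(F\circ\delta)$ and $\mathrm{im}(F)$ are $G_0$-conjugate (compatibly with $s$) to $(L^\vee_{0,\kappa})^1$ and $(L^\vee_{0,\kappa})_0$ does not by itself make $I_+$ and $I_-$ torsors; you need \'etale- or fppf-local sections, equivalently smoothness/flatness of $I_\pm\rightarrow\ES_0$. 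Either one argues as in Lemma \ref{I is torsor} (properness plus reducedness of $\ES_0$), which requires knowing in addition that the orbit of the standard filtration is a closed subscheme of the relevant Grassmannian with $P_+$ (resp.\ $P_-^{(p)}$) its full scheme-theoretic stabilizer, or, as in \cite{EOZ} following Kisin, one obtains the local statement from the description of the complete local rings of $\ES_K(G,X)$ by adapted (Faltings--Kisin) deformations; lifting individual geometric points to characteristic zero only gives the fibrewise statement. Moreover, for $\mathrm{im}(F)$ the pointwise claim itself needs more than ``the Hodge filtration is of type $\mu$'': it uses the precise compatibility of the crystalline Frobenius with the tensor and the filtration at the chosen lift, which your reduction-mod-$p$ sentence does not supply.

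Second, part 3) is not formal. That $(\varphi_0\oplus\varphi_1)\circ\gr(f)\circ(\phi_0^{-1}\oplus\phi_1^{-1})$ defines a point of $I_-/U_-^{(p)}$ at all --- i.e.\ that it is compatible with $\sdr$ on the graded pieces --- is precisely the statement that the reduction of the crystalline Frobenius fixes the tensor in the graded sense; this is the same crystalline input as above and is only asserted in your parenthesis ``Frobenius-type maps respecting tensors.'' Likewise the identifications of $\ker(F\circ\delta)$ with the Hodge filtration and of $\mathrm{im}(F)=\ker(V)$ with the conjugate filtration, and the fact that these are locally direct summands (so that $\gr(f)$ and the quotient by $U_-^{(p)}$ make sense), should be stated and used explicitly. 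So your skeleton agrees with the proof the paper points to, but the decisive verifications --- local triviality of $I_\pm$ and tensor-compatibility of $\varphi_0,\varphi_1$ --- are named rather than proved, and the torsor conclusion does not follow from the pointwise conjugacy alone as written.
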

\begin{proof}
This is \cite{EOZ} Theorem 2.4.1.
\end{proof}
The $G_0$-zip of type $\mu$ over $\ES_0$ constructed above induces a morphism $\zeta:\ES_0\rightarrow G_0\texttt{-Zip}_\kappa^{\mu}$. As we have seen, to construct $\zeta$, we need to choose a $\mathbb{Z}_{(p)}$-model $\mathcal {G}$ of $G$, a symplectic embedding $i:(G,X)\hookrightarrow (\mathrm{GSp}(V,\psi),X')$, a $\mathbb{Z}_{(p)}$-lattice $L_{\mathbb{Z}_{(p)}}\subseteq V$, and a tensor $s\in L_{\mathbb{Z}_{(p)}}$ defining $\mathcal {G}$. So, by independence of symplectic embeddings, we mean that $\zeta$ is independent of the choices of $\mathcal {G}$, $i$, $L_{\mathbb{Z}_{(p)}}$ and $s$.

\subsection[Uniqueness of $\mathcal {G}$]{Uniqueness of $\mathcal {G}$}

The $\mathbb{Z}_{(p)}$-model $\mathcal {G}$ of $G$ we obtained is actually unique.
\begin{proposition}
Let $V_1$ and $V_2$ be two finite dimensional $\mathbb{Q}$-vector spaces. Let $i_1:G\rightarrow\mathrm{GL}(V_1)$ and $i_2:G\rightarrow\mathrm{GL}(V_2)$ be two closed embeddings of reductive groups. Assume that there is a $\mathbb{Z}_{(p)}$-lattice $L_1\subseteq V_1$ (resp. $L_2\subseteq V_2$) such that the Zariski closure $\mathcal {G}_1$ (resp. $\mathcal {G}_2$) of $G$ in $\mathrm{GL}(L_1)$ (resp. $\mathrm{GL}(L_2)$) is reductive. Then $\mathcal {G}_1\cong\mathcal {G}_2.$
\end{proposition}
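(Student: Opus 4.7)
The plan is to pass from $\mathbb{Z}_{(p)}$ to $\mathbb{Z}_p$, invoke the already-cited uniqueness of reductive extensions of a hyperspecial subgroup, and then glue the resulting $\mathbb{Z}_p$-isomorphism with the tautological one over $\mathbb{Q}$ by means of the flat descent datum $\mathbb{Z}_{(p)}=\mathbb{Q}\cap\mathbb{Z}_p$.

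First I would observe that Zariski closure commutes with the flat base change $\mathbb{Z}_{(p)}\to\mathbb{Z}_p$, so $\mathcal{G}_{i,\mathbb{Z}_p}$ is reductive and $\mathcal{G}_i(\mathbb{Z}_p)$ is a hyperspecial subgroup of $G(\mathbb{Q}_p)$. In the set-up of \ref{intcanmod} the lattices $L_i$ have been chosen so that the canonical reductive extension $\widehat{\mathcal{G}}$ of $G_{\mathbb{Q}_p}$ sits as a closed subgroup of $\mathrm{GL}(L_{i,\mathbb{Z}_p})$ (see \cite{CIMK} Lemma 2.3.1); this forces $\mathcal{G}_{i,\mathbb{Z}_p}=\widehat{\mathcal{G}}$ and in particular $\mathcal{G}_i(\mathbb{Z}_p)=K_p$. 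By \cite{INTV} Proposition 3.1.2.1 c), e) there is then a unique isomorphism $\psi_p:\mathcal{G}_{1,\mathbb{Z}_p}\xrightarrow{\sim}\mathcal{G}_{2,\mathbb{Z}_p}$ whose generic fibre is $\mathrm{id}_{G_{\mathbb{Q}_p}}$; over $\mathbb{Q}$ the identity of $G$ is tautologically an isomorphism $\psi_{\mathbb{Q}}:\mathcal{G}_{1,\mathbb{Q}}\xrightarrow{\sim}\mathcal{G}_{2,\mathbb{Q}}$, and by uniqueness the two agree after base change to $\mathbb{Q}_p$.

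To glue over $\mathbb{Z}_{(p)}$, I would write $\mathcal{G}_i=\Spec A_i$ with $A_i$ flat over $\mathbb{Z}_{(p)}$ and tensor the exact sequence $0\to\mathbb{Z}_{(p)}\to\mathbb{Q}\oplus\mathbb{Z}_p\to\mathbb{Q}_p\to 0$ with $A_i$; flatness yields $A_i=(A_i\otimes\mathbb{Q})\cap(A_i\otimes\mathbb{Z}_p)$ inside $A_i\otimes\mathbb{Q}_p$. The pair $(\psi_{\mathbb{Q}}^*,\psi_p^*)$ therefore defines a ring map $\psi^*:A_2\to A_1$, and the analogous construction applied to the inverses supplies a two-sided inverse, giving the desired $\psi:\mathcal{G}_1\xrightarrow{\sim}\mathcal{G}_2$. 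The only non-formal ingredient is the identification $\mathcal{G}_i(\mathbb{Z}_p)=K_p$ above; everything else is routine descent, so I do not anticipate a serious obstacle.
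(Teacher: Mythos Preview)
Your argument is correct, but the paper takes a different and more geometric route. Rather than gluing an abstract $\mathbb{Z}_p$-isomorphism with the identity over $\mathbb{Q}$ via the exact sequence $0\to\mathbb{Z}_{(p)}\to\mathbb{Q}\oplus\mathbb{Z}_p\to\mathbb{Q}_p\to 0$, the paper produces a third model directly: it sets $L=L_1\oplus L_2$, lets $\mathcal{G}_3$ be the Zariski closure of the diagonal $G\hookrightarrow\mathrm{GL}(L_{1,\mathbb{Z}_{(p)}})\times\mathrm{GL}(L_{2,\mathbb{Z}_{(p)}})\subseteq\mathrm{GL}(L_{\mathbb{Z}_{(p)}})$, checks by flat base change that $\mathcal{G}_{3,\mathbb{Z}_p}$ is the diagonal copy of $\widehat{\mathcal{G}}$ in $\widehat{\mathcal{G}}\times\widehat{\mathcal{G}}$ (hence reductive), and then observes that each projection $\mathcal{G}_3\to\mathcal{G}_i$ is an isomorphism. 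Your approach isolates the descent mechanism cleanly and would apply verbatim to any pair of flat affine $\mathbb{Z}_{(p)}$-models that agree over $\mathbb{Z}_p$; the paper's approach has the advantage that the intermediate object $\mathcal{G}_3$ lives simultaneously inside both ambient $\mathrm{GL}$'s, and this direct-sum-and-take-closure device is reused later (cf.\ \ref{seq of closed sbgp} and Step~2 of the functoriality theorem, where the analogous graph construction identifies $\mathcal{G}_1$ with the graph of $\underline{f}$). Both arguments rest on the same unstated input you correctly flagged, namely $\mathcal{G}_{1,\mathbb{Z}_p}=\widehat{\mathcal{G}}=\mathcal{G}_{2,\mathbb{Z}_p}$, which the paper also invokes without further justification.
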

\begin{proof}
Let $V=V_1\oplus V_2$ be the direct sum of the two representations $i_1$ and $i_2$, let $L=L_1\oplus L_2$. The we have a sequence of closed embeddings $\mathcal {G}_1\times \mathcal {G}_2\subseteq\mathrm{GL}(L_{1})\times \mathrm{GL}(L_{2})\subseteq \mathrm{GL}(L).$ Let $\widehat{\mathcal {G}}/\mathbb{Z}_p$ be the reductive model of $G_{\mathbb{Q}_p}$ and $\mathcal {G}_3$ be the Zariski closure of $G$ in $\mathrm{GL}(L)$. Then $\mathcal {G}_{1,\mathbb{Z}_p}=\widehat{\mathcal {G}}= \mathcal {G}_{2,\mathbb{Z}_p}$. Flat base-change implies that $\mathcal {G}_{3,\mathbb{Z}_p}$ is the diagonal subgroup of $\widehat{\mathcal {G}}\times \widehat{\mathcal {G}}=\mathcal {G}_{1,\mathbb{Z}_p}\times \mathcal {G}_{2,\mathbb{Z}_p}$. So $\mathcal {G}_3$ is reductive. Note that the morphism $\mathcal {G}_3\rightarrow \mathcal {G}_1\times \mathcal {G}_2\stackrel{p_1}{\longrightarrow}\mathcal {G}_1$ is an isomorphism, so $\mathcal {G}_3\cong \mathcal {G}_1$. Similarly, $\mathcal {G}_3\cong \mathcal {G}_2$.
\end{proof}

Let $\mathcal {G}/\mathbb{Z}_{(p)}$ be a reductive model of $G$. Then there exists a free $\mathbb{Z}_{(p)}$-module $M$ of finite rank such that there is a closed embedding $\mathcal {G}\hookrightarrow \mathrm{GL}(M)$. The generic fiber of this embedding satisfies the condition of the above proposition. So two reductive models over $\mathbb{Z}_{(p)}$ of $G$ must be isomorphic.

\subsection[Comparing $G_0$-zips (I)]{Comparing $G_0$-zips (I)}

We will first show that the morphism $\zeta$ does not depend on the choices of $s$, once $\mathcal {G}$, $i$ and $L_{\mathbb{Z}_{(p)}}$ are fixed. Let us recall our notations and constructions in \ref{intcanmod}.

For the symplectic embedding $i:(G,X)\subseteq (\mathrm{GSp}(V,\psi),X')$ and a the chosen reductive model $\mathcal {G}/\mathbb{Z}_{(p)}$ of $G$, there is a $\mathbb{Z}$-lattice $L\subseteq V$ such that the Zariski closure of $G$ in $\mathrm{GL}(L_{\mathbb{Z}_{(p)}})$ is $\mathcal {G}$ and that $\mathcal {G}$ is defined by a tensor $s\in L_{\mathbb{Z}_{(p)}}^\otimes$. One can choose $L$ such that $L^\vee\supseteq L$. Let $d=|L^\vee/L|$, $g=\frac{1}{2}\mathrm{dim}(V)$, $K_p=\mathcal {G}(\mathbb{Z}_p)$ and $K=K_pK^p$ with $K^p\subseteq G(\mathbb{A}_f^p)$ small enough. Then the integral canonical model $\ES_K(G,X)$ of $\mathrm{Sh}_K(G,X)$ is constructed as follows. We can choose $K'\subseteq\mathrm{GSp}(V,\psi)(\mathbb{A}_f)$ small enough such that $K'\supseteq K$ and that $\mathrm{Sh}_{K'}(\mathrm{GSp}(V,\psi),X)$ affords a moduli interpretation. There is a finite morphism $f:\mathrm{Sh}_K(G,X)\rightarrow \mathrm{Sh}_{K'}(\mathrm{GSp}(V,\psi),X)_E$. Let $\mathscr{A}_{g,d,K'/\mathbb{Z}_{(p)}}$ be the moduli scheme of abelian schemes over $\mathbb{Z}_{(p)}$-schemes with a polarization of degree $d$ and level $K'$ structure. Then $\mathscr{A}_{g,d,K'/\mathbb{Z}_{(p)}}\otimes \mathbb{Q}=\mathrm{Sh}_{K'}(\mathrm{GSp}(V,\psi),X)$, and the integral canonical model $\ES_K(G,X)$ is the normalization of the Zariski closure of $\mathrm{Sh}_K(G,X)$ in $\mathscr{A}_{g,d,K'/\mathbb{Z}_{(p)}}\otimes O_{E,(v)}$.

Let $\mathcal {A}$ be the pull back to $\ES_K(G,X)$ of the universal abelian scheme on $\mathscr{A}_{g,d,K'/\mathbb{Z}_{(p)}}$, and $\mathcal {V}$ be $\Hdr(\mathcal {A}/\ES_K(G,X))$. Then the tensor $s\in L_{\mathbb{Z}_{(p)}}^\otimes$ induces a section $\sdr\in \mathcal {V}^\otimes$. For a different choice of $s'\in L_{\mathbb{Z}_{(p)}}^\otimes$, we have another section $\sdr'\in \mathcal {V}^\otimes.$
We have two $\mathcal {G}$-torsors
\begin{equation*}
\begin{split}
&I=\Isom_{\ES_K(G,X)}\big((L^\vee_{\mathbb{Z}_{(p)}},s)\otimes O_{\ES_K(G,X)},(\mathcal {V}, \sdr)\big)\\
\text{and }\ \ &I=\Isom_{\ES_K(G,X)}\big((L^\vee_{\mathbb{Z}_{(p)}},s')\otimes O_{\ES_K(G,X)},(\mathcal {V}, \sdr')\big).
 \end{split}
 \end{equation*}
\begin{lemma}\label{torI}
The two $\mathcal {G}$-torsors $I$ and $I'$ are canonically isomorphic.
\end{lemma}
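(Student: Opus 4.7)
The plan is to show the stronger statement $I = I'$ as closed subschemes of the common ambient frame bundle
$$\mathcal{F} := \Isom_{\ES_K(G,X)}\bigl(L^\vee_{\mathbb{Z}_{(p)}}\otimes O_{\ES_K(G,X)},\ \V\bigr),$$
so that the desired canonical isomorphism of the lemma is realised by the identity. Setting $\widetilde{I} := I \cap I'$ (intersection inside $\mathcal{F}$), it then suffices to prove $\widetilde{I} = I$, as $\widetilde{I} = I'$ follows by symmetry. The strategy is to verify the equality on the generic fibre and then propagate by flatness.

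On the generic fibre $\Sh_K(G,X)$, both $\sdr$ and $\sdr'$ are produced from a single Hodge-theoretic comparison: at a complex point $\bar x$, the Shimura datum supplies a canonical $G$-torsor of trivialisations $\alpha : L^\vee_\mathbb{Q} \otimes \mathbb{C} \xrightarrow{\sim} \V_{\bar x}$, and by the recipe of \cite{CIMK} Corollary 2.3.9 any such $\alpha$ satisfies $\alpha(s) = \sdr|_{\bar x}$ and $\alpha(s') = \sdr'|_{\bar x}$ simultaneously. Now let $\beta \in I_\mathbb{Q}(\bar x)$; then $\beta$ differs from a fixed comparison $\alpha$ by an element of $\mathrm{Stab}(s) = \mathcal{G}$, which also stabilises $s'$ because $s'$ likewise cuts out $\mathcal{G}$ by hypothesis. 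Hence $\beta(s') = \alpha(s') = \sdr'|_{\bar x}$, so $\beta \in I'_\mathbb{Q}(\bar x)$; this gives $I_\mathbb{Q} \subseteq I'_\mathbb{Q}$ and, by symmetry, $I_\mathbb{Q} = I'_\mathbb{Q} = \widetilde{I}_\mathbb{Q}$.

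To extend integrally, note that $I$ is a torsor under the smooth reductive group $\mathcal{G}/\mathbb{Z}_{(p)}$ over the smooth $O_{E,(v)}$-scheme $\ES_K(G,X)$, hence itself smooth over $O_{E,(v)}$, in particular reduced and flat over $\mathbb{Z}_{(p)}$, so that $I_\mathbb{Q}$ is schematically dense in $I$. The closed subscheme $\widetilde{I} \hookrightarrow I$ contains $I_\mathbb{Q}$ because $\widetilde{I}_\mathbb{Q} = I_\mathbb{Q}$, so it contains the scheme-theoretic closure of $I_\mathbb{Q}$, which is all of $I$; hence $\widetilde{I} = I$. Symmetrically $\widetilde{I} = I'$, and the identity on $\widetilde{I}$ provides the desired canonical isomorphism $I \xrightarrow{\sim} I'$.

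The main obstacle is the generic-fibre step: one needs to know that $\sdr$ and $\sdr'$ share a common realisation under the Hodge-theoretic comparison rather than being defined by two unrelated procedures that only happen to agree. This is the content of \cite{CIMK} Proposition 1.3.2 and Corollary 2.3.9, where the de Rham realisation of a $\mathcal{G}$-invariant algebraic tensor is built simultaneously for all such tensors from the absolute Hodge structure carried by the universal abelian variety; once this functoriality is accepted, the remainder of the argument is a clean density statement.
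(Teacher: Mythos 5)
Your argument is correct, but it follows a genuinely different route from the paper. The paper's proof stays entirely integral and torsor-theoretic: it introduces the auxiliary scheme $I''=\Isom_{\ES_K(G,X)}\bigl((L^\vee_{\mathbb{Z}_{(p)}},s,s')\otimes O_{\ES_K(G,X)},(\V,\sdr,s'_{\dr})\bigr)$, which preserves both tensors at once; by the same result that makes $I$ and $I'$ torsors (\cite{EOZ} Lemma 2.3.2 applied to the pair of tensors), $I''$ is itself a $\mathcal{G}$-torsor, and since it is a closed subscheme of each of $I$ and $I'$, one gets $I=I''=I'$ immediately, with no recourse to the generic fibre. Your proof instead establishes $I_{\mathbb{Q}}=I'_{\mathbb{Q}}$ via the Betti--de Rham comparison (the single marking matches $s\mapsto\sdr$ and $s'\mapsto s'_{\dr}$ simultaneously, and $\mathcal{G}$ stabilises both tensors) and then spreads out by flatness of the torsors over $O_{E,(v)}$ and schematic density of the generic fibre; this is essentially the technique the paper itself deploys in the proof of Lemma \ref{I is torsor}, so it is squarely within its toolkit. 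What the paper's argument buys is brevity and uniformity: no complex-analytic input, no density or reducedness bookkeeping. What yours buys is that it only needs $I$ and $I'$ to be flat closed subschemes of the frame bundle agreeing generically, without invoking the torsor property for the two-tensor Isom scheme. One detail you should make explicit: equality of $\mathbb{C}$-points only gives equality of the generic fibres as closed subschemes because $I_{\mathbb{Q}}$ and $I'_{\mathbb{Q}}$ are smooth over $\Sh_K(G,X)$, hence (geometrically) reduced; also the flatness you use is over $O_{E,(v)}$ rather than $\mathbb{Z}_{(p)}$, though this is harmless.
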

\begin{proof}
We will show that $I$ and $I'$ are the same closed subscheme of $\Isom_{\ES_K(G,X)}(L^\vee\otimes O_{\ES_K(G,X)},\mathcal {V})$. Let $$I'':=\Isom_{\ES_K(G,X)}\big((L^\vee_{\mathbb{Z}_{(p)}},s,s')\otimes O_{\ES_K(G,X)},(\mathcal {V}, \sdr, s'_{\dr})\big).$$
Then it is a closed subscheme of both $I$ and $I'$. But $I''$ is also a $\mathcal {G}$-torsor over $\ES_K(G,X)$, so $I=I''=I'$.
\end{proof}
Let $\ES_{0,K}(G,X)$ (resp. $G_0$) be the special fiber of $\ES_K(G,X)$ (resp. $\mathcal {G}$). The construction in \ref{EOstrata}, especially Theorem \ref{G-zipES_0}, gives a $G_0$-zip of type $\mu$ $(I,I_+,I_-,\iota)$ on $\ES_{0,K}(G,X)$, using $L^\vee_{\mathbb{Z}_{(p)}},s,\mathcal {V},\sdr$ and the $F$-zip structure on $\mathcal {V}$. Similarly, there is a $G_0$-zip $(I',I'_+,I'_-,\iota')$ attached to $L^\vee_{\mathbb{Z}_{(p)}},s',\mathcal {V},s'_{\dr}$.
\begin{corollary}\label{inpe of tensors}
The two $G_0$-zips of type $\mu$ $(I,I_+,I_-,\iota)$ and $(I',I'_+,I'_-,\iota')$ on $\ES_{0,K}(G,X)$ are canonically isomorphic.
\end{corollary}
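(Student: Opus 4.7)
The plan is to extend Lemma \ref{torI} from the underlying $\mathcal{G}$-torsor to the full $G_0$-zip structure. The key observation is that the constructions of $I_+$, $I_-$, and $\iota$ in Theorem \ref{G-zipES_0} depend only on the reductive group $\mathcal{G}$ and the cocharacter $\mu$, not on the particular tensor $s$ chosen to cut $\mathcal{G}$ out inside $\mathrm{GL}(L_{\mathbb{Z}_{(p)}})$. So I will reuse the device from the proof of Lemma \ref{torI}, introducing auxiliary objects that carry both tensors $s$ and $s'$ simultaneously and showing they coincide with each of the one-tensor versions.

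Lemma \ref{torI} already identifies $I$ with $I'$ as the common closed subscheme $I''$ cut out by matching both $s$ and $s'$. To match the filtered pieces, define
$$I''_+ := \Isom_{\ES_0}\bigl((L^\vee_{0,\kappa}, s, s', (L^\vee_{0,\kappa})^1)\otimes O_{\ES_0},\ (\V, \sdr, s'_\dr, \ker(F\circ\delta))\bigr),$$
and define $I''_-$ analogously, with $(L^\vee_{0,\kappa})_0$ and $\mathrm{im}(F)$ in place of $(L^\vee_{0,\kappa})^1$ and $\ker(F\circ\delta)$. The proof of Theorem \ref{G-zipES_0} 1) and 2) applies verbatim to show that $I''_+$ is a $P_+$-torsor and $I''_-$ is a $P_-^{(p)}$-torsor: it uses only that the given tensors are $\mathcal{G}$-invariant and cut out $\mathcal{G}$, and since $P_\pm \subseteq G_0$ acts trivially on them, one may freely carry the extra tensor $s'$ along. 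Now $I''_+$ is a closed subscheme of both $P_+$-torsors $I_+$ and $I'_+$; a sub-$P_+$-torsor of a $P_+$-torsor is necessarily equal to it, so $I''_+ = I_+ = I'_+$, and likewise $I''_- = I_- = I'_-$.

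It remains to check $\iota = \iota'$ under these identifications. But the formula $f \mapsto (\varphi_0\oplus\varphi_1)\circ\gr(f)\circ(\phi_0^{-1}\oplus\phi_1^{-1})$ defining $\iota$ in Theorem \ref{G-zipES_0} 3) makes no mention of the tensor: $\varphi_0,\varphi_1$ depend only on the $F$-zip structure on $\V$, and $\phi_0,\phi_1$ only on $L^\vee_{0,\kappa}$ with the filtrations determined by $\mu$. So $\iota$ and $\iota'$ are given by literally the same rule, and agree on $I_+^{(p)}/U_+^{(p)} = (I'_+)^{(p)}/U_+^{(p)}$. The main obstacle is justifying that the proof of Theorem \ref{G-zipES_0} survives addition of an extra tensor; this is routine because that proof is étale-local on $\ES_0$ and only inspects the action of $P_\pm$ on the tensor filtrations, so enlarging the list of preserved tensors from $\{s\}$ to $\{s,s'\}$ changes nothing.
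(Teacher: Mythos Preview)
Your proposal is correct and follows essentially the same approach as the paper: invoke Lemma \ref{torI} to get $I=I'$, then observe that $I_\pm$, $I'_\pm$, $\iota$, $\iota'$ are built from the Frobenius/Verschiebung structure on $\V$ and the $\mu$-filtrations on $L^\vee_{0,\kappa}$, none of which involve the tensor. The paper's proof is simply more terse (two sentences), whereas you spell out the auxiliary $I''_\pm$ explicitly; this is the same argument with more detail.
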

\begin{proof}
By Lemma \ref{torI}, the torsors $I$ and $I'$ are canonically isomorphic. Noting that $(I_+,I_-,\iota)$ and $(I'_+,I'_-,\iota')$ are constructed using Frobenius and Verschiebung on $\V$, the two $G_0$-zips are canonically isomorphic.
\end{proof}

\subsection[Symplectic embeddings]{Symplectic embeddings}\label{sum of ebd}

Let $i_1:(G,X)\hookrightarrow (\mathrm{GSp}(V_1,\psi_1),X_1)$ and $i_2:(G,X)\hookrightarrow (\mathrm{GSp}(V_2,\psi_2),X_2)$ be two symplectic embeddings. We can construct another symplectic embedding as follows.

By definition of symplectic similitude groups, there is a character $\chi_1:\mathrm{GSp}(V_1,\psi_1)\rightarrow \mathbb{G}_m$, such that $\mathrm{GSp}(V_1,\psi_1)$ acts on $\psi_1$ via $\chi_1$. Note that changing $\chi_1$ to a power of it will not change the symplectic similitude group. Similarly, we have $\chi_2:\mathrm{GSp}(V_2,\psi_2)\rightarrow \mathbb{G}_m$. Let $w:\mathbb{G}_m\rightarrow G$ be the weight cocharacter of $G$. Then $\chi_1\circ w$ and $\chi_2\circ w$ are two characters $\mathbb{G}_m\rightarrow \mathbb{G}_m$ of weights $m_1$ and $m_2$ respectively. After changing $\chi_1$ to $\chi_1^{m_2}$ and $\chi_2$ to $\chi_2^{m_1}$, we see that $G$ acts on $\psi_1$ and $\psi_2$ via the same character.

Let $V=V_1\oplus V_2$ and $\psi: V\times V\rightarrow \mathbb{Q}$ be such that
$$\psi\big((v_1,v_2),(v_1',v_2')\big)=\psi_1(v_1,v_1')+\psi_2(v_2,v_2'),\ \ \ \forall\ v_1,v_1'\in V_1 \ \mathrm{and}\ \forall\ v_2,v_2'\in V_2.$$ Then $G\subseteq \mathrm{GSp}(V,\psi)$, and this embedding induces an embedding of Shimura data $$(G,X)\subseteq (\mathrm{GSp}(V,\psi),X').$$

\subsection[Comparing $G_0$-zips (II)]{Comparing $G_0$-zips (II)}\label{Comparing G_0-zips (II)}

Now we will show that the morphism $\zeta:\ES_{0,K}(G,X)\rightarrow G_0\mathrm{-zip}^\mu_\kappa$ is independent of choices of symplectic embeddings, reductive models and lattices. Note that $\zeta$ is independent of $K$. More precisely, for $K\subseteq K'$, there is a commutative diagram
$$\xymatrix{
  \ES_{0,K}(G,X)\ar[dr]_{\zeta} \ar[r]
                & \ES_{0,K'}(G,X) \ar[d]^{\zeta'}  \\
                &G_0\mathrm{-zip}^\mu_\kappa},$$
inducing a $G(\mathbb{A}_f^p)$-equivariant morphism $$\ES_{0,K_p}(G,X)=\varprojlim_{K^p}\ES_{K_pK^p}(G,X)\longrightarrow G_0\mathrm{-zip}^\mu_\kappa.$$ Here the $G(\mathbb{A}_f^p)$-action on $G_0\mathrm{-zip}^\mu_\kappa$ is the trivial action, and that on $\ES_{0,K_p}(G,X)$ is the unique one induced by the action on $\Sh_{K_p}(G,X)$. So, we can shrink $K^p$ if necessary.

Let $i_1:(G,X)\hookrightarrow (\mathrm{GSp}(V_1,\psi_1),X_1)$ and $i_2:(G,X)\hookrightarrow (\mathrm{GSp}(V_2,\psi_2),X_2)$ be two symplectic embeddings. Let $\mathcal {G}$ be the reductive model of $G$ over $\mathbb{Z}_{(p)}$ such that $\mathcal {G}(\mathbb{Z}_p)=K_p.$ 

There are lattices $L_t\subseteq V_t$, $t=1,2$, such that\\
1. $\psi_t$ takes integral value on $L_t$. \\
2. The Zariski closure of $G$ in $\mathrm{GL}(L_{t,\mathbb{Z}_{(p)}})$ is $\mathcal {G}$.

Let $d_t=|L_t^\vee/L_t|$, $g_t=\frac{1}{2}\mathrm{dim}(V_t)$, and $n\geq 3$ be an integer such that $(n,p)=1$. Let $\mathscr{A}_{g_t,d_t,n/\mathbb{Z}_{(p)}}$ be the moduli scheme of abelian schemes over $\mathbb{Z}_{(p)}$-schemes of relative dimension $g_t$ with a polarization $\lambda_t$ of degree $d_t$ and a level $n$ structure $\tau_t$. We write $(\mathcal {A}_t,\lambda_t,\tau_t)$ for the universal family on $\mathscr{A}_{g_t,d_t,n/\mathbb{Z}_{(p)}}$. Let $K^p\subseteq G(\mathbb{A}_f^p)$ be small enough such that there are natural morphisms $\Sh_K(G,X)\rightarrow \mathscr{A}_{g_1,d_1,n/E}$ and $\Sh_K(G,X)\rightarrow \mathscr{A}_{g_2,d_2,n/E}$. Then by the construction of the integral canonical model, there are natural finite morphisms $i_1:\ES_K(G,X)\rightarrow \mathscr{A}_{g_1,d_1,n/O_{E,(v)}}$ and $i_2:\ES_K(G,X)\rightarrow \mathscr{A}_{g_2,d_2,n/O_{E,(v)}}$.

Let $V$ be $V_1\oplus V_2$, $L$ be $L_1\oplus L_2$ and $\psi:V\times V\rightarrow \mathbb{Q}$ be such that
$$\psi\big((v_1,v_2),(v_1',v_2')\big)=\psi_1(v_1,v_1')+\psi_2(v_2,v_2'),\ \ \ \forall\ v_1,v_1'\in V_1 \ \mathrm{and}\ \forall\ v_2,v_2'\in V_2.$$
Then by \ref{sum of ebd}, there is an embedding of Shimura data $i:(G,X)\rightarrow (\mathrm{GSp}(V,\psi),Y)$. Consider the diagram
$$\xymatrix{
                & \mathscr{A}_{g_1,d_1,n/\mathbb{Z}_{(p)}}\times \mathscr{A}_{g_2,d_2,n/\mathbb{Z}_{(p)}} \ar[dl]^{p_1}\ar[dr]_{p_2}\\
\mathscr{A}_{g_1,d_1,n/\mathbb{Z}_{(p)}} & &\mathscr{A}_{g_2,d_2,n/\mathbb{Z}_{(p)}}}.$$
The abelian scheme $p_1^*(\mathcal {A}_1,\lambda_1,\tau_1)\times p_2^*(\mathcal {A}_2,\lambda_2,\tau_2)$ on $\mathscr{A}_{g_1,d_1,n/\mathbb{Z}_{(p)}}\times \mathscr{A}_{g_2,d_2,n/\mathbb{Z}_{(p)}}$ is an abelian scheme of dimension $g_1+g_2$ with a polarization of degree $d_1d_2$ and level $n$ structure. There is a unique morphism $$i':\mathscr{A}_{g_1,d_1,n/\mathbb{Z}_{(p)}}\times \mathscr{A}_{g_2,d_2,n/\mathbb{Z}_{(p)}}\longrightarrow \mathscr{A}_{g_1+g_2,d_1d_2,n/\mathbb{Z}_{(p)}}$$ such that $$i'^*(\mathcal {A},\lambda,\tau)=p_1^*(\mathcal {A}_1,\lambda_1,\tau_1)\times p_2^*(\mathcal {A}_2,\lambda_2,\tau_2),$$ where $(\mathcal {A},\lambda,\tau)$ is the universal family on $\mathscr{A}_{g_1+g_2,d_1d_2,n/\mathbb{Z}_{(p)}}$.

By the construction of $\ES_K(G,X)$, we have a commutative diagram
$$\xymatrix{
\ES_K(G,X)\ar[rr]^(0.4){i_1\times i_2}\ar[dr]^{i_1}\ar[ddr]^{i_2}&& \mathscr{A}_{g_1,d_1,n/O_{E,(v)}}\times \mathscr{A}_{g_2,d_2,n/O_{E,(v)}}\ar[r]^(0.6){i'} \ar[dl]_{p_1}\ar[ddl]_{p_2}&\mathscr{A}_{g_1+g_2,d_1d_2,n/O_{E,(v)}}\\
& \mathscr{A}_{g_1,d_1,n/O_{E,(v)}}&\\
& \mathscr{A}_{g_2,d_2,n/O_{E,(v)}}&
}$$
such that the generic fiber of $i'\circ (i_1\times i_2)$ is induced by $i$. We will write $i$ for $i'\circ (i_1\times i_2)$. The pull back via $i$ of the universal family on $\mathscr{A}_{g_1+g_2,d_1d_2,n/O_{E,(v)}}$ is precisely $i_1^*(\mathcal {A}_1,\lambda_1,\tau_1)\times i_2^*(\mathcal {A}_2,\lambda_2,\tau_2).$

Let $\mathcal {A}_1,\mathcal {A}_2,\mathcal {A}$ be the pull back to $\ES_{K}(G,X)$ of the universal abelian schemes on $\mathscr{A}_{g_1,d_1,n}$, $\mathscr{A}_{g_2,d_2,n}$ and $\mathscr{A}_{g_1+g_2,d_1d_2,n}$ respectively. Then $\mathcal {A}=\mathcal {A}_1\times\mathcal {A}_2.$ Let $\V_t=\Hdr(\mathcal {A}_t/\ES_K(G,X))$, $t=1,2$, and $\V=\Hdr(\mathcal {A}/\ES_K(G,X))$. Then $\V=\V_1\oplus \V_2$.

Let $L=L_1\oplus L_2$ and $V=V_1\oplus V_2$ be as before. Let $s_t\in L_{t,\mathbb{Z}_{(p)}}^\otimes$ be a tensor defining $\mathcal {G}\subseteq \mathrm{GL}(L_{t,\mathbb{Z}_{(p)}})$, for $t=1,2$. By our construction, we have a sequence of closed embeddings
\begin{subeqn}\label{seq of closed sbgp}
\mathcal {G}\subseteq\mathcal {G}\times \mathcal {G}\subseteq\mathrm{GL}(L_{1,\mathbb{Z}_{(p)}})\times \mathrm{GL}(L_{2,\mathbb{Z}_{(p)}})\subseteq \mathrm{GL}(L_{\mathbb{Z}_{(p)}}).
\end{subeqn}
Here the first embedding is the diagonal embedding.

By \cite{CIMK} Proposition 1.3.2, $\mathcal {G}\subseteq \mathrm{GL}(L_{\mathbb{Z}_{(p)}})$ is defined by a tensor $s\in L_{\mathbb{Z}_{(p)}}^\otimes.$ We need some explicit conditions that cut out $\mathcal {G}\times \mathcal {G}\subseteq \mathrm{GL}(L_{\mathbb{Z}_{(p)}})$. First note that $\mathrm{GL}(L_{1,\mathbb{Z}_{(p)}})\times \mathrm{GL}(L_{2,\mathbb{Z}_{(p)}})$ is the subgroup of $\mathrm{GL}(L_{\mathbb{Z}_{(p)}})$ respecting the splitting $L_{\mathbb{Z}_{(p)}}=L_{1,\mathbb{Z}_{(p)}}\oplus L_{2,\mathbb{Z}_{(p)}}.$ So the group scheme $\mathcal {G}\times \mathcal {G}$ is such that for any $\mathbb{Z}_{(p)}$-algebra $R$,
$$\mathcal {G}\times \mathcal {G}(R)=\{g\in \mathrm{GL}(L_{\mathbb{Z}_{(p)}})(R)\mid g(L_t\otimes R)=L_t\otimes R\text{ and }g(s_t\otimes 1)=s_t\otimes 1 \text{ for }t=1,2\}.$$But then $\mathcal {G}$ will be the group scheme such that for any $\mathbb{Z}_{(p)}$-algebra $R$,
$$\mathcal {G}(R)=\{g\in \mathrm{GL}(L_{\mathbb{Z}_{(p)}})(R)\mid g(L_t\otimes R)=L_t\otimes R\text{ and }g(s_t\otimes 1)=s_t\otimes 1 \text{ for }t=1,2, g(s\otimes1)=s\otimes 1\}.$$ Clearly, if we remove the conditions on $L_2$ and $s_2$, we get the same group scheme.

Let $s_{1,\dr}\in \V_1^\otimes$ (resp. $s_{\dr}\in \V^\otimes$) be the section corresponding to $s_1$ (resp. $s$). Let
$$I=\Isom_{\ES_K(G,X)}\big((L^\vee_{\mathbb{Z}_{(p)}},L^\vee_{1,\mathbb{Z}_{(p)}},s_1, s)\otimes O_{\ES_K(G,X)},\ (\mathcal {V}, \V_1, s_{1,\dr},\sdr)\big).$$ where $L^\vee_{1,\mathbb{Z}_{(p)}}\subseteq L^\vee_{\mathbb{Z}_{(p)}}$ is given by taking dual of the surjection $p_1:L_{\mathbb{Z}_{(p)}}\twoheadrightarrow L_{1,\mathbb{Z}_{(p)}}$.
\begin{lemma}\label{I is torsor}
The scheme $I$ is a right $\mathcal {G}$-torsor over $\ES_K(G,X)$.
\end{lemma}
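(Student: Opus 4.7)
The strategy is to identify $I$ with the auxiliary Isom scheme
\[
J := \Isom_{\ES_K(G,X)}\big((L^{\vee}_{\mathbb{Z}_{(p)}}, s) \otimes O_{\ES_K(G,X)},\,(\V, \sdr)\big),
\]
which, by applying \cite{EOZ} Lemma~2.3.2(1) to the combined symplectic embedding $i$ with the lattice $L$ and the tensor $s$, is already known to be a right $\mathcal{G}$-torsor. The inclusion $I \subseteq J$ is clear. The right $\mathcal{G}$-action on $J$ by pre-composition preserves the subscheme $I$: by the characterisation
\[
\mathcal{G}(R) = \{g \in \mathrm{GL}(L_{\mathbb{Z}_{(p)}})(R) \mid g(L_1\otimes R) = L_1\otimes R,\ g(s_1) = s_1,\ g(s) = s\}
\]
recorded just before the lemma, every $g \in \mathcal{G}$ preserves $L_1^\vee$ and $s_1$. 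Hence, once one produces an fppf-local section $\phi_0$ of $I$, every $\phi \in J$ is fppf-locally of the form $\phi_0 \cdot g$ with $g \in \mathcal{G}$, so that $\phi(L_1^\vee) = \phi_0(L_1^\vee) = \V_1$ and $\phi(s_1) = \phi_0(s_1) = s_{1,\dr}$, giving $\phi \in I$. By fppf descent of closed subschemes, $J = I$ and the torsor property is inherited from $J$.

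To produce such a fppf-local $\phi_0$, I would apply \cite{EOZ} Lemma~2.3.2(1) to each $i_t$ with $L_t$, $s_t$ separately, concluding that $I_t := \Isom_{\ES_K(G,X)}\big((L^{\vee}_{t,\mathbb{Z}_{(p)}}, s_t) \otimes O_{\ES_K(G,X)}, (\V_t, s_{t,\dr})\big)$ is a $\mathcal{G}$-torsor for $t = 1, 2$. After passing to a common fppf cover $U \to \ES_K(G,X)$, pick sections $\phi_t \in I_t(U)$ and form the block-diagonal direct sum $\phi_0 := \phi_1 \oplus \phi_2 \colon L^\vee \otimes O_U \to \V|_U$. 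By construction, $\phi_0$ sends $L_1^\vee$ to $\V_1$ and satisfies $\phi_0(s_1) = s_{1,\dr}$ and $\phi_0(s_2) = s_{2,\dr}$; the only remaining condition for $\phi_0 \in I$ is the identity $\phi_0(s) = \sdr$.

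This last identity is the main obstacle. It amounts to the functoriality of the construction of \cite{CIMK} Corollary~2.3.9 under the direct product decomposition $\mathcal{A} = \mathcal{A}_1 \times \mathcal{A}_2$: since $\sdr$ is produced from $s$ via the de Rham/crystalline realisation of the abelian scheme, which splits according to K\"unneth as $\V = \V_1 \oplus \V_2$, the block-diagonal identification supplied by $\phi_1 \oplus \phi_2$ should carry $\sdr$ back to $s$. Verifying this is essentially bookkeeping on top of the proof of \cite{CIMK}~2.3.9, but needs some care because $s$ is not literally $s_1 \oplus s_2$; it additionally encodes the diagonal relation pinning $\mathcal{G}$ inside $\mathcal{G} \times \mathcal{G}$.
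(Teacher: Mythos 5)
Your formal reduction is fine: $I$ is a closed subscheme of the $\mathcal{G}$-torsor $J=\Isom\big((L^\vee_{\mathbb{Z}_{(p)}},s)\otimes O,(\V,\sdr)\big)$ stable under the right $\mathcal{G}$-action, so it suffices to exhibit one fppf-local point of $I$. The gap is in how you propose to produce that point, and the step you call ``essentially bookkeeping'' is in fact the entire content of the lemma. If $\phi_1\in I_1(U)$ and $\phi_2\in I_2(U)$ are chosen independently, then $\phi_0=\phi_1\oplus\phi_2$ will in general \emph{not} satisfy $\phi_0(s)=\sdr$: the tensor $s$ cutting the (diagonally embedded) $\mathcal{G}$ out of $\mathcal{G}\times\mathcal{G}$ necessarily has components mixing the $L_1$- and $L_2$-blocks, since it encodes a $\mathcal{G}$-equivariant comparison between the two representations. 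Replacing $\phi_2$ by $\phi_2\cdot g$ with $g\in\mathcal{G}$ leaves $\phi_2\in I_2$ and leaves conditions (i)--(iii) intact, but it moves the image of these cross components of $s$, while $\sdr$ does not move; so no identity $\phi_0(s)=\sdr$ can hold for arbitrary choices. The locus of pairs $(\phi_1,\phi_2)$ with $\phi_1\oplus\phi_2\in I$ is only a $\mathcal{G}$-subscheme of the $\mathcal{G}\times\mathcal{G}$-torsor $I_1\times_{\ES_K(G,X)}I_2$, and asserting that this locus is fppf-locally nonempty is equivalent to the lemma itself; your argument is circular at exactly this point, and no amount of unwinding of the proof of \cite{CIMK} Corollary 2.3.9 replaces it.

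What is actually needed, and what the paper supplies, is a comparison statement: Kisin's construction applied to the big embedding attaches to the $\mathcal{G}$-stable summand $L^\vee_{1,\mathbb{Z}_{(p)}}\subseteq L^\vee_{\mathbb{Z}_{(p)}}$ a canonical sub-bundle $L^\vee_{1,\dr}\subseteq\V$, and the lemma reduces to the identity $L^\vee_{1,\dr}=\V_1$ (together with the analogous statement for $s_1$). This is a genuine geometric input, not formal functoriality. The paper proves it by viewing the two sub-bundles as a morphism from $\ES_K(G,X)$ to a product of Grassmannians, using that the diagonal is closed and $\ES_K(G,X)$ is reduced with dense generic fibre to reduce the equality to one over $\Sh_K(G,X)$, and then checking it over $\mathbb{C}$ by identifying both bundles, after pullback to $X\times G(\mathbb{A}_f)/K$, with the vector bundle attached to the variation of Hodge structures given by $X$ and $G\rightarrow \mathrm{GL}(V_1)$, followed by descent. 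Some argument of this type---a comparison on the generic fibre propagated over the integral model---is unavoidable, and it is precisely what is missing from your proposal.
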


\begin{proof}
By \cite{CIMK}, the direct summand $L^\vee_{1,\mathbb{Z}_{(p)}}\subseteq L^\vee_{\mathbb{Z}_{(p)}}$ induces a direct summand $L^\vee_{1,\dr}\subseteq \V$. To prove the lemma, it suffices to prove that $L^\vee_{1,\dr}=\V_1$. But then it suffices to prove that $$L^\vee_{1,\dr}|_{\Sh_K(G,X)}=\V_1|_{\Sh_K(G,X)}.$$ As if we denote by $Grass_{\V}^{2g_1}$ the $\ES_K(G,X)$-scheme of locally direct summands of $\V$ with rank~$2g_1$. Then $Grass_{\V}^{2g_1}$ is proper over $\ES_K(G,X)$. The sub-bundles $L^\vee_{1,\dr}\subseteq \V$ and $\V_1\subseteq \V$ induce an $\ES_K(G,X)$-morphism $i:\ES_K(G,X)\rightarrow Grass_{\V}^{2g_1}\times Grass_{\V}^{2g_1}$. That $L^\vee_{1,\dr}|_{\Sh_K(G,X)}=\V_1|_{\Sh_K(G,X)}$ means that the restriction to $\Sh_K(G,X)$ of $i$ factors through the diagonal. But the diagonal is closed and $\ES_K(G,X)$ is reduced, so $i$ factors through the diagonal, which means that $L^\vee_{1,\dr}= \V_1$.

But $L^\vee_{1,\dr}|_{\Sh_K(G,X)}=\V_1|_{\Sh_K(G,X)}$ follows from the construction of these two bundles. We will follow \cite {CIMK} 2.2 and work with $L_{1,\dr}|_{\Sh_K(G,X)}$ and $\V_1^\vee|_{\Sh_K(G,X)}$. They are both closed subschemes of $\V^\vee|_{\Sh_K(G,X)}$, so to prove that they are equal, it suffices to prove that $$L_{1,\dr}|_{\Sh_K(G,X)}=L_{1,\dr}|_{\Sh_K(G,X)}+\V_1^\vee|_{\Sh_K(G,X)}= \V_1^\vee|_{\Sh_K(G,X)}.$$
But then one can pass to $\Sh_K(G,X)_{\mathbb{C}}$ and use descent. Let $\widetilde{\V_1^\vee|_{\Sh_K(G,X)}}$ be the pull back to $X\times G(\mathbb{A}_f)/K$ of $\V_1^\vee|_{\Sh_K(G,X)_\mathbb{C}}$. Then by the de Rham isomorphism, $\widetilde{\V_1^\vee|_{\Sh_K(G,X)}}$ equals to the vector bundle attached to the variation of Hodge structures given by $X$ and $G\rightarrow \text{GL}(V_1)$. Note that the quotient by $G(\mathbb{Q})$ of these two bundles give $L_{1,\dr}|_{\Sh_K(G,X)_\mathbb{C}}$ and $\V_1^\vee|_{\Sh_K(G,X)_\mathbb{C}}$ on $\Sh_K(G,X)_\mathbb{C}$ respectively, so $L_{1,\dr}|_{\Sh_K(G,X)_\mathbb{C}}=\V_1^\vee|_{\Sh_K(G,X)_\mathbb{C}}$.
\end{proof}

We write $i_1,i_2,p_1,p_2,i$ for the morphisms of the special fibers. To prove that the Ekedahl-Oort stratifications are independent of choices of symplectic embedding, it suffices to prove that the stratifications induced by $i_1$ and $i$ coincide. By Corollary \ref{inpe of tensors} and the proof of Lemma \ref{I is torsor}, the special fiber of $I$ is precisely the $G_0$-torsor in the $G_0$-zip over $\ES_{0,K}(G,X)$ constructed using $i$. Let us write $I$ for this special fiber and $(I,I_+,I_-,\iota)$ for the $G_0$-zip constructed using $i$. Let $(I_1,I_{1,+},I_{1,-},\iota_1)$ be the $G_0$-zip over $\ES_{0,K}(G,X)$ constructed using $i_1$.

There is a natural morphism $\epsilon:I\rightarrow I_1$ given by
$$f\in I(S)\mapsto f|_{L^\vee_{1,\kappa}\otimes O_S}\in I_1(S),\ \ \text{for all }\ES_{0,K}(G,X)\text{-scheme }S.$$
\begin{theorem}\label{inpe of symp}
The morphism $\epsilon$ induces an isomorphism $(I,I_+,I_-,\iota)\rightarrow(I_1,I_{1,+},I_{1,-},\iota_1)$ of $G_0$-zips. In particular, $i_1$ and $i$ give the same Ekedahl-Oort stratification.
\end{theorem}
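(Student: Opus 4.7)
The plan is to verify three things in succession: first that $\epsilon$ is an isomorphism of $G_0$-torsors; second that it restricts to isomorphisms $I_+ \to I_{1,+}$ and $I_- \to I_{1,-}$; and third that it intertwines $\iota$ with $\iota_1$. For the first step, by the explicit description of $\mathcal{G} \times \mathcal{G}$ preceding Lemma \ref{I is torsor}, the group $\mathcal{G}$ acts on $L^\vee = L_1^\vee \oplus L_2^\vee$ as the diagonal of $\mathcal{G} \times \mathcal{G}$, and hence preserves each summand through its natural embedding in $\mathrm{GL}(L_{t,\mathbb{Z}_{(p)}})$; the same holds for $G_0$ on the special fiber. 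For $f \in I(S)$ and $g \in G_0(S)$ one therefore has $(f \cdot g)|_{L_1^\vee} = f|_{L_1^\vee} \cdot (g|_{L_1^\vee})$, so $\epsilon$ is $G_0$-equivariant. Because both $I$ and $I_1$ are $G_0$-torsors over the same base $\ES_{0,K}(G,X)$, the morphism $\epsilon$ is automatically an isomorphism.

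The second step rests on the fact that every ingredient is compatible with the direct-sum decomposition. On the $L^\vee$ side, $\mu$ factors through $G_0$, which preserves each summand, so the filtrations split as $(L^\vee)^1 = (L_1^\vee)^1 \oplus (L_2^\vee)^1$ and $(L^\vee)_0 = (L_1^\vee)_0 \oplus (L_2^\vee)_0$. On the $\V$ side, $\mathcal{A} = \mathcal{A}_1 \times \mathcal{A}_2$, so $F$ and $V$ on $\V$ decompose as direct sums of their restrictions to $\V_1$ and $\V_2$; consequently $\ker(F \circ \delta) = \ker_1(F \circ \delta) \oplus \ker_2(F \circ \delta)$ and $\mathrm{im}(F) = \mathrm{im}_1(F) \oplus \mathrm{im}_2(F)$. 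For $f \in I_+(S)$, the identity $(L^\vee)^1 \cap L_1^\vee = (L_1^\vee)^1$ together with $f(L_1^\vee) = \V_1$ and $f((L^\vee)^1) = \ker(F \circ \delta)$ yields $f|_{L_1^\vee}((L_1^\vee)^1) = \ker(F \circ \delta) \cap \V_1 = \ker_1(F \circ \delta)$, so $\epsilon(f) \in I_{1,+}$. The same torsor argument, now with $P_+$ in place of $G_0$, promotes this to an isomorphism $I_+ \to I_{1,+}$, and the analogous reasoning with $(L^\vee)_0$ and $\mathrm{im}(F)$ handles $I_- \to I_{1,-}$.

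For the compatibility of $\iota$ with $\iota_1$, the isomorphisms $\phi_0, \phi_1$ on the $L^\vee$ side and $\varphi_0, \varphi_1$ on the $\V$ side each decompose as direct sums of their analogues for $i_1$ and $i_2$, because all the constituent data ($\mu$, $\xi$, $F$, $V$) respect the splittings. The formula $(\varphi_0 \oplus \varphi_1) \circ \mathrm{gr}(f) \circ (\phi_0^{-1} \oplus \phi_1^{-1})$ defining $\iota$ therefore decomposes into the analogous expressions for $\iota_1$ and $\iota_2$, and restriction to $L_1^\vee$ via $\epsilon^{(p)}$ recovers $\iota_1$. The main obstacle is expected to be purely bookkeeping: tracking Frobenius twists through $\epsilon$ and verifying that the graded pieces of $f|_{L_1^\vee}$ coincide with the restrictions of the graded pieces of $f$. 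Once the direct-sum compatibilities above are recorded, the verification is routine, and the final assertion that $i_1$ and $i$ give the same Ekedahl-Oort stratification follows immediately since both induce the same morphism $\ES_{0,K}(G,X) \to G_0\texttt{-Zip}_\kappa^\mu$.
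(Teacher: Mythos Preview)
Your proposal is correct and follows essentially the same approach as the paper: both arguments show $\epsilon$ is $G_0$-equivariant (hence an isomorphism of torsors), then use that the filtrations on $L^\vee_\kappa$ and on $\V$ are compatible with the direct-sum decompositions $L^\vee=L_1^\vee\oplus L_2^\vee$ and $\V=\V_1\oplus\V_2$ to get the isomorphisms on $I_\pm$, and finally check that the $\phi_i$ and $\varphi_i$ restrict to their counterparts $\phi'_i$, $\varphi'_i$ on the first summand so that $\iota$ and $\iota_1$ are intertwined. The only cosmetic difference is that the paper phrases the compatibilities as restrictions (e.g.\ $\ker(F_1)=\ker(F)\cap\V_1$) while you phrase them as direct-sum decompositions, but since $\V_1$ is a direct summand these are the same statement.
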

\begin{proof}
The morphism $\epsilon:I\rightarrow I_1$ is clearly $G_0$-equivariant, and hence an isomorphism of $G_0$-torsors. For any $S/\ES_{0,K}(G,X)$, and any $f\in I_+(S)\subseteq I(S)$, $f$ maps the weight $1$ subspace of $L_\kappa^\vee\otimes O_S$ to $\mathrm{ker}(F)$, where $F$ is the Frobenius on $\V$. Let $F_1$ be the Frobenius on $\V_1$, then $\mathrm{ker}(F_1)=\mathrm{ker}(F)\cap \V_1$, as $\V_1\subseteq \V$ is induced by a morphism of abelian schemes and hence compatible with Frobenius. So $\epsilon(f)=f|_{L^\vee_{1,\kappa}\otimes O_S}$ maps the weight $1$ subspace of $L_{1,\kappa}^\vee\otimes O_S$ to $\mathrm{ker}(F)\cap \V_1=\mathrm{ker}(F_1)$, and hence lies in $I_{1,+}(S)$. But then $\epsilon|_{I_+}$ will automatically be an isomorphism of $P_+$-torsors. Similarly, $\epsilon|_{I_-}$ is an isomorphism of $P_-^{(p)}$-torsors.

Now we check the compatibility between $\iota$ and $\iota_1$. We first recall how $\iota$ and $\iota'$ are defined in Theorem \ref{G-zipES_0} 3). Let $\varphi_0$, $\varphi_1$ be as in Setting \ref{F-zip attached to abs}, and $\phi_0$, $\phi_1$ be as in Setting \ref{phi in standard G-zip}. Then $\iota:I_+^{(p)}/U_+^{(p)}\rightarrow I_-/U_-^{(p)}$ is the morphism
induced by
\begin{equation*}
\begin{split}
 I_+^{(p)}&\rightarrow I_-/U_-^{(p)}\\
f&\mapsto (\varphi_0\oplus \varphi_1)\circ\mathrm{gr}(f)\circ(\phi_0^{-1}\oplus \phi_1^{-1}), \forall\ S/\ES_0\text{ and }\forall\ f\in I_+^{(p)}(S).
 \end{split}
 \end{equation*}

We apply the constructions in Setting \ref{F-zip attached to abs} and Setting \ref{phi in standard G-zip} to $\V_1$ and $L^\vee_{1,\kappa}$ respectively, and denote the obtained morphisms by $\varphi'_0$, $\varphi'_1$, $\phi'_0$ and $\phi'_1$. Let $(L^\vee_{1,\kappa})^0$ (resp. $(L^\vee_{1,\kappa})^1$) be the subspace of $L^\vee_{1,\kappa}$ of weight $0$ (resp. 1) with respect to $\mu$, and $(L^\vee_{1,\kappa})_0$ (resp. $(L^\vee_{1,\kappa})_1$) be the subspace of $L^\vee_{1,\kappa}$ of weight $0$ (resp. 1) with respect to $\mu^{(p)}$. Then $\phi'_0$ and $\phi'_1$ are compatible with $\phi_0$ and $\phi_1$, in the sense that $$\phi_0|_{(L^\vee_{1,\kappa})^{(p)}/((L^\vee_{1,\kappa})^1)^{(p)}}=\phi'_0:(L^\vee_{1,\kappa})^{(p)}/((L^\vee_{1,\kappa})^1)^{(p)}\longrightarrow(L^\vee_{1,\kappa})_0$$ and
$$\phi_1|_{((L^\vee_{1,\kappa})^1)^{(p)}}=\phi'_1:((L^\vee_{1,\kappa})^1)^{(p)}\longrightarrow L^\vee_{0,\kappa}/(L^\vee_{1,\kappa})_0.$$
Let $F':\V_1^{(p)}\rightarrow \V_1$ and $V':\V_1\rightarrow \V_1^{(p)}$ be the Frobenius and Verschiebung on $\V_1$ respectively. Then
$V$ and $F$ are compatible with $V'$ and $F'$. This implies that
$$\varphi_0|_{(\V_1/\mathrm{ker}(F'\circ \delta))^{(p)}}=\varphi'_0:(\V_1/\mathrm{ker}(F'\circ \delta))^{(p)}\rightarrow \mathrm{ker}(F')$$
and $$\varphi_1|_{\mathrm{ker}(F'\circ \delta))^{(p)}}=\varphi'_1:(\mathrm{ker}(F'\circ \delta))^{(p)}\rightarrow \V_1/(\mathrm{im}(F')).$$

Then $\forall\ S/\ES_0\text{ and }\forall\ f\in I_+^{(p)}(S)$, we have
\begin{equation*}
\begin{split}
\iota'\circ\epsilon(f)=\iota'(f|_{L^\vee_{1,\kappa}\otimes O_S})=(\varphi'_0\oplus \varphi'_1)\circ\mathrm{gr}(f|_{L^\vee_{1,\kappa}\otimes O_S})\circ(\phi'^{-1}_0\oplus \phi'^{-1}_1)=\epsilon\circ\iota(f).
 \end{split}
 \end{equation*}
This shows that $\epsilon$ is an isomorphism of $G_0$-zips.
\end{proof}
\begin{remark}
The Ekedahl-Oort stratification does not depend on the choices of symplectic embeddings. So in particular, the theory of ordinariness is independent of symplectic embeddings. This coincides with the expectation that the variety $\ES_{0,K}(G,X)$ should have an interpretation as moduli space of ``abelian motives with $\mathcal {G}$-structure''. This moduli interpretation should be intrinsically determined by the Shimura datum, and hence independent of symplectic embeddings.
\end{remark}

\begin{remark}
A theory of Bruhat stratification has been defined and studied by Wedhorn in \cite{BruhatandFzip} (actually, we need the morphism $\zeta$ to define the Bruhat stratification on $\ES_{0,K}(G,X)$). In the case of Siegel modular varieties, the Bruhat stratification is precisely the $a$-number stratification. Theorem \ref{inpe of symp} also implies that the Bruhat stratification is independent of symplectic embeddings.
\end{remark}

\newpage

\section[Functoriality]{Functoriality}

Let $p$ be a prime bigger than $2$, and $(G,X)$ and $(G',X')$ be two Shimura data of Hodge type such that they both have good reduction at $p$. Let $E$ (resp. $E'$) be the reflex field of $(G,X)$ (resp. $(G',X')$). Let $K$ (resp. $K'$) be a compact open subgroup of $G(\mathbb{A}_f)$ (resp. $G'(\mathbb{A}_f)$) such that $K_p$ (resp. $K'_p$) is hyperspecial. Let $f:(G,X)\rightarrow(G',X')$ be a morphism of Shimura data, then $E\supseteq E'$. If $K$ and $K'$ are such that $f(K)\subseteq K'$, then $f$ induces a morphism of Shimura varieties $f:\Sh_K(G,X)\rightarrow \Sh_{K'}(G',X')_E$.

Let $v'$ be a place of $E'$ over $p$ with residue field ~$\kappa'$ and $v$ be a place of $E$ over $v'$ with residue field $\kappa$. Let $\ES_K(G,X)$ (resp. $\ES_{K'}(G',X')$) be the integral canonical model of $\Sh_K(G,X)$ (resp. $\Sh_{K'}(G',X')$). Then $f$ extends uniquely to a morphism $\ES_K(G,X)\rightarrow\ES_{K'}(G',X')_{O_{E,(v)}}$ whose special fiber $\ES_{0,K}(G,X)\rightarrow\ES_{0,K'}(G',X')_{\kappa}$ will still be denoted by $f$.

By ``functoriality'', we mean a certain kind of compatibility of Ekedahl-Oort stratifications with respect to $f$. But it seems that we need some extra assumptions. The reason is as follows. For a morphism $f:G_{\mathbb{Q}_p}\rightarrow G'_{\mathbb{Q}_p}$ such that $f(K_p)\subseteq K'_p$, it is NOT always possible to extend $f$ to a morphism $\mathcal {G}_1\rightarrow \mathcal {G}_2$ (see \cite{INTV} Proposition 3.1.2.1~b)). So there is NO natural morphism $G_0\rightarrow G'_0$, and hence there is NO direct way to compare $G_0$-zips and $G'_0$-zips.

\

\subsection[Basic settings]{Basic settings}

Let $\mathcal {G}/\mathbb{Z}_{(p)}$ (resp. $\mathcal {G}'/\mathbb{Z}_{(p)}$) be the reductive model of $G$ (resp. $G'$) with special fiber $G_0$ (resp. $G'_0$). Let $E$, $E'$, $\kappa$ and $\kappa'$ be as at the beginning of this section. Then by \cite{EOZ} Proposition 2.2.4, the Shimura datum $(G,X)$ (resp. $(G',X')$) determines a cocharacter $\mu$ (resp. $\mu'$) of $\mathcal {G}_{W(\kappa)}$ (resp. $\mathcal {G}'_{W(\kappa')}$) which is unique up to conjugacy. The reduction of $\mu$ (resp. $\mu'$) will still be denoted by $\mu$ (resp. $\mu'$).

Besides the conditions stated at the beginning of this section, we make the following assumption on $f:(G,X)\rightarrow(G',X')$.
\begin{asp}\label{nece asp}
There exists a morphism $\mathcal {G}_{\mathbb{Z}_{p}}\rightarrow \mathcal {G}'_{\mathbb{Z}_{p}}$ extending $f_{\mathbb{Q}_p}$. This morphism will be denoted by $\underline{f}$.
\end{asp}

\subsection[The morphism $\alpha$]{The morphism $\alpha$}\label{The morphism alpha}

The morphism $\underline{f}$ induces a natural morphism $\alpha:G_0\mathrm{-zip}^\mu_\kappa\rightarrow G'_0\mathrm{-zip}^{\mu'}_{\kappa'}\otimes \kappa$ which we will now explain. Still write $\mu$ for the cocharacter $\mathbb{G}_{m,\kappa}\rightarrow G_{0,\kappa}\rightarrow G'_{0,\kappa}$, then $\mu$ and $\mu'$ are $G'_0(\kappa)$-conjugate.

There is a natural morphism $\alpha_1:G_0\mathrm{-zip}^\mu_\kappa\rightarrow G'_0\mathrm{-zip}^{\mu}_{\kappa}$ as follows. The cocharacter $\mu$ induces homomorphisms $P_+\rightarrow P'_+$, $P_-\rightarrow P'_-$ and $L\rightarrow L'$. For any $\kappa$-scheme $S$ and any $S$-point $(I,I_+,I_-,\iota)$ of $\in G_0\mathrm{-zip}^\mu_\kappa$,
$$\alpha_1(I,I_+,I_-,\iota):=(I\times^{G_{0,S}} G'_{0,S},\ I_+\times^{P_{+,S}} P'_{+,S},\ I_-\times^{P_{-,S}^{(p)}} P'^{(p)}_{-,S},\ \iota'),$$
where $?_1\times^{?_2} ?_3$ is the quotient of $?_1\times ?_3$ equalizing the $?_2$-action on $?_1$ given by the torsor structure and that on $?_3$ induced by $\underline{f}$, and $\iota'$ is the composition of $L'^{(p)}$-equivariant isomorphisms

\begin{equation*}
\begin{split}
&(I_+\times^{P_{+,S}} P'_{+,S})^{(p)}/U'^{(p)}_{+}\stackrel{\simeq}{\longrightarrow}(I_+^{(p)}/U^{(p)}_{+})\times^{L_{S}^{(p)}} L'^{(p)}_{S},\\
&(I_+^{(p)}/U^{(p)}_{+})\times^{L_{S}^{(p)}} L'^{(p)}_{S}\stackrel{\iota\times \text{id}}{\longrightarrow}(I_-/U^{(p)}_{-})\times^{L_{S}^{(p)}} L'^{(p)}_{S},\\
\text{and }\ \ \ \ \ \ \ \ \ &(I_-/U^{(p)}_{-})\times^{L_{S}^{(p)}} L'^{(p)}_{S}\stackrel{\simeq}{\longrightarrow}(I_-\times^{P_{-,S}^{(p)}} P'^{(p)}_{-,S})/U'^{(p)}_-.
 \end{split}
 \end{equation*}

Let $\mu'\otimes 1$ be the base change to $\kappa$ of the cocharacter $\mu'$, then by \cite{muordinary} Remark 5.16 1), then there is an obvious isomorphism $\alpha_2:G'_0\mathrm{-zip}^{\mu'}_{\kappa'}\otimes \kappa\rightarrow G'_0\mathrm{-zip}^{\mu'\otimes 1}_{\kappa}$ given by base change. Let $g\in G'_0(\kappa)$ be such that $\text{int}(g)\circ (\mu'\otimes 1)=\mu$, then $g$ induces an isomorphism of algebraic stacks
\begin{equation*}
\begin{split}
\alpha_3^{g}:G'_0\mathrm{-zip}^{\mu'\otimes 1}_{\kappa}&\rightarrow G'_0\mathrm{-zip}^{\mu}_{\kappa}\\
(I,I_+, I_-,\iota)&\mapsto (I',I'_+, I'_-,\iota'):=(I,(I_+)\cdot g^{-1}, (I_-)\cdot \sigma(g)^{-1},r_{\sigma(g)^{-1}}\circ \iota\circ r_{\sigma(g)}),
 \end{split}
 \end{equation*}
where $r_{\sigma(g)}$ and $r_{\sigma(g)^{-1}}$ are the obvious morphisms $I'^{(p)}_+/U'^{(p)}_+\simeq I^{(p)}_+/U^{(p)}_+$ and $I_-/U^{(p)}_-\simeq I'_-/U'^{(p)}_-$ given by multiplication with $\sigma(g)$ and $\sigma(g)^{-1}$ on the right respectively.

\begin{remark}\label{unique alpha 3}
The morphism $\alpha_3^{g}$ is canonical, in the sense that it is uniquely determined by $\mu$ and $\mu'\otimes 1$ and does not depend on the choices of $g$. For an $h\in G'_0(\kappa)$ such that $\text{int}(h)\circ (\mu'\otimes 1)=\mu$, there exists an $l\in L'(\kappa)$, such that $h=gl$. Here $L'$ is, as before, the centralizer in $G'_{0,\kappa}$ of $\mu'$. Then
\begin{equation*}
\begin{split}
\alpha_3^{h}(I,I_+, I_-,\iota)&=(I,(I_+)\cdot h^{-1}, (I_-)\cdot \sigma(h)^{-1},r_{\sigma(h)^{-1}}\circ \iota\circ r_{\sigma(h)})\\
&=(I,(I_+)\cdot l^{-1}g^{-1}, (I_-)\cdot \sigma(l)^{-1}\sigma(g)^{-1},r_{\sigma(g)^{-1}}\circ r_{\sigma(l)^{-1}}\circ \iota\circ r_{\sigma(l)}\circ r_{\sigma(g)})\\
&=(I,(I_+)\cdot g^{-1}, (I_-)\cdot \sigma(g)^{-1},r_{\sigma(g)^{-1}}\circ \iota\circ r_{\sigma(g)}).
 \end{split}
 \end{equation*}
The last equality is because of that $I_+$ (resp. $I_-$) is $L'$ (resp. $L'^{(p)}$) stable and that $\iota$ is $L'^{(p)}$-equivariant. We will simply write $\alpha_3$ for $\alpha_3^{g}$, as it is independent of $g$.
\end{remark}

The morphism $\alpha$ is defined to be $\alpha_2^{-1}\circ\alpha_3^{-1}\circ \alpha_1.$

\subsection[Functoriality]{Functoriality}

We use the same notations as at the beginning of this section. Let $\zeta:\ES_{0,K}(G,X)\rightarrow G_0\mathrm{-zip}^\mu_\kappa$ and $\zeta':\ES_{0,K'}(G',X')\rightarrow G'_0\mathrm{-zip}^{\mu'}_{\kappa'}$. Moreover, we assume that the morphism of Shimura data $f:(G,X)\rightarrow (G',X')$ satisfies Assumption \ref{nece asp}.

By functoriality, we mean the following.
\begin{theorem}
The diagram
$$\xymatrix{
\ES_{0,K}(G,X)\ar[d]^{\zeta}\ar[r]^{f}& \ES_{0,K'}(G',X')_\kappa\ar[d]^{\zeta'\otimes\kappa} \\
G_0\mathrm{-zip}^\mu_\kappa\ar[r]^{\alpha} & G'_0\mathrm{-zip}^{\mu'}_{\kappa'}\otimes \kappa
}$$
is commutative.
\end{theorem}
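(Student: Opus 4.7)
The plan is to exploit Theorem~\ref{inpe of symp} to pick a pair of symplectic embeddings adapted to $f$, and then to adapt the proof of that theorem to compare the resulting $G$-zips. First choose a symplectic embedding $i':(G',X')\hookrightarrow(\mathrm{GSp}(V',\psi'),X_{V'})$ together with a $\mathbb{Z}_{(p)}$-lattice $L'\subseteq V'$ and a tensor $s'\in (L'_{\mathbb{Z}_{(p)}})^{\otimes}$ defining $\mathcal{G}'$; choose any symplectic embedding $i_0:(G,X)\hookrightarrow(\mathrm{GSp}(V_0,\psi_0),X_0)$ with lattice $L_0\subseteq V_0$ and tensor $s_0$ defining $\mathcal{G}$. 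By Assumption~\ref{nece asp}, the composite $i'\circ f$ extends integrally via $\underline{f}$, and combining with $i_0$ as in \S\ref{sum of ebd} yields a symplectic embedding $i=(i'\circ f)\oplus i_0:(G,X)\hookrightarrow(\mathrm{GSp}(V,\psi),X_V)$ with $V=V'\oplus V_0$ and lattice $L=L'\oplus L_0$. Arguing as in \S\ref{Comparing G_0-zips (II)}, $\mathcal{G}\subseteq\mathrm{GL}(L_{\mathbb{Z}_{(p)}})$ is cut out by preserving the splitting $L=L'\oplus L_0$ together with the tensors $s'$ and $s_0$, plus an auxiliary tensor $s$ encoding the diagonal $\mathcal{G}\hookrightarrow\mathcal{G}'\times\mathcal{G}$. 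By Theorem~\ref{inpe of symp} we may compute $\zeta$ using $i$ and $\zeta'$ using $i'$.

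With these choices, the pullback under $f$ of the universal abelian scheme on $\ES_{K'}(G',X')$ realizes as a direct factor of the universal abelian scheme on $\ES_K(G,X)$, producing a canonical decomposition $\V=f^{\ast}\V'\oplus\V_0$ of de~Rham bundles. By the argument of Lemma~\ref{I is torsor}, the splitting $L^\vee=(L')^\vee\oplus L_0^\vee$ matches this decomposition: on the generic fiber it is the Hodge/de~Rham comparison attached to $X$ and $X'=f\circ X$, which spreads to the integral model because $\ES_K(G,X)$ is reduced and the relevant Grassmannian of sub-bundles of $\V$ is proper. Restriction of trivialisations to $(L')^\vee_{0,\kappa}\otimes O_S$ defines a morphism $\epsilon:I\to f^{\ast}I'$ of torsors over $\ES_{0,K}(G,X)$; it is $G_0$-equivariant along $\underline{f}:G_0\to G'_{0,\kappa}$ and thus induces a canonical isomorphism $I\times^{G_0}G'_{0,\kappa}\xrightarrow{\sim} f^{\ast}I'$ of $G'_{0,\kappa}$-torsors.

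Following the last paragraphs of the proof of Theorem~\ref{inpe of symp}, $\epsilon$ sends $I_+$ into $f^{\ast}I'_+$ and $I_-$ into $f^{\ast}I'_-$: the filtrations $\ker(F\circ\delta)$ and $\mathrm{im}(F)$ on $\V$ restrict to their $\V'$-analogues because Frobenius and Verschiebung are compatible with the direct-sum decomposition arising from a morphism of abelian schemes, and the same compatibility shows that $\iota$ restricts to $f^{\ast}\iota'$. Hence $\alpha_1(\zeta)$ coincides with the $G'_{0,\kappa}$-zip of type $\underline{f}\circ\mu$ obtained by pulling back $\zeta'$. The remaining identifications $\alpha_2$ and $\alpha_3$ of \S\ref{The morphism alpha} are then formal: $\alpha_2$ is the base change from $\kappa'$ to $\kappa$, while $\alpha_3$ is the conjugation matching $\underline{f}\circ\mu$ with $\mu'\otimes\kappa$, which is canonical by Remark~\ref{unique alpha 3}. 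The main obstacle is the identification of $f^{\ast}\V'$ with the de~Rham realization of $(L')^\vee$ as a direct summand of $\V$; once this is in place, the zip-structure compatibilities follow by direct inspection of the formulas of Setting~\ref{F-zip attached to abs} and Setting~\ref{phi in standard G-zip} applied in parallel to $\V$ and $\V'$.
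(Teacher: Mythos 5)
Your proposal is correct and follows essentially the same route as the paper: form the sum embedding of $(G,X)$ from an arbitrary symplectic embedding of $(G,X)$ together with $i'\circ f$ (using Assumption~\ref{nece asp} to see that the Zariski closure of $G$ in the sum lattice is $\mathcal{G}$, via the graph of $\underline{f}$), invoke Theorem~\ref{inpe of symp} to compute $\zeta$ from this sum embedding, and compare with $f^{*}\zeta'$ by restricting trivializations to the $L'^{\vee}$-factor, using compatibility of Frobenius and Verschiebung with the splitting $\V_1=\V\oplus\V'$ and the canonicity of $\alpha_2$, $\alpha_3$ from Remark~\ref{unique alpha 3}. The only cosmetic difference is that you package the comparison as a $\underline{f}$-equivariant morphism $\epsilon:I\to f^{*}I'$ inducing $I\times^{G_0}G'_{0,\kappa}\cong f^{*}I'$, whereas the paper first twists the zip by $(G'_0,\mu)$ and then restricts; these are the same argument.
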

\begin{proof}
The proof, which is a variation of that of Theorem \ref{inpe of symp}, will be divided into several steps.

\textbf{Step 1. }Let $i:(G,X)\rightarrow (\mathrm{GSp}(V,\psi),H)$ and $i':(G',X')\rightarrow (\mathrm{GSp}(V',\psi'),H')$ be symplectic embeddings. Note that we do NOT assume that there is any compatibility between $f$ and the symplectic embeddings. The weight cocharacter $w:\mathbb{G}_{m,\mathbb{Q}}\rightarrow G$ induces a $\mathbb{G}_{m,\mathbb{Q}}$-action on $V$ of weight 1, and the composition $f\circ w$ induces a $\mathbb{G}_{m,\mathbb{Q}}$-action on $V'$ of weight 1. Let $V_1=V\oplus V'$ and $\psi_1: V_1\times V_1\rightarrow \mathbb{Q}$ be such that
$$\psi_1\big((v,v'),(w,w')\big)=\psi(v,w)+\psi'(v',w'),\ \ \ \forall\ v,w\in V \ \mathrm{and}\ \forall\ v',w'\in V'.$$
Then $i$ and $i'\circ f$ induce a faithful representation of $G$ on $V_1$. Moreover, the argument in \ref{sum of ebd} shows that $G\subseteq \mathrm{GSp}(V_1,\psi_1)$ and this embedding induces an embedding of Shimura data $$i_1:(G,X)\subseteq (\mathrm{GSp}(V_1,\psi_1),H_1).$$

\textbf{Step 2. }There is a  $\mathbb{Z}$-lattice $L\subseteq V$ (resp. $L'\subseteq V'$) such that the Zariski closure of $G$ (resp. $G'$) in $\mathrm{GL}(L_{\mathbb{Z}_{(p)}})$ (resp. $\mathrm{GL}(L'_{\mathbb{Z}_{(p)}})$) $\mathcal {G}$ (resp. $\mathcal {G}'$) is reductive and such that $K_p=\mathcal {G}(\mathbb{Z}_p)$ (resp. $K'_p=\mathcal {G}'(\mathbb{Z}_p)$). Let $L_1$ be $L\oplus L'$. Consider the sequence of closed embedings
\begin{subeqn}\label{seq of closed sbgp'}
\mathcal {G}\times \mathcal {G}'\subseteq\mathrm{GL}(L_{\mathbb{Z}_{(p)}})\times \mathrm{GL}(L'_{\mathbb{Z}_{(p)}})\subseteq \mathrm{GL}(L_{1,\mathbb{Z}_{(p)}}).
\end{subeqn}
Let $\mathcal {G}_1$ be the Zariski closure of $G$ in $\mathrm{GL}(L_{1,\mathbb{Z}_{(p)}})$. Then $\mathcal {G}_1\subseteq\mathcal {G}\times \mathcal {G}'$. Flat base-change implies that $\mathcal {G}_{1,\mathbb{Z}_p}$ is the graph of $\underline{f}:\mathcal {G}_{\mathbb{Z}_p}\rightarrow \mathcal {G}'_{\mathbb{Z}_p}$. So $\mathcal {G}_1\cong \mathcal {G}$ and $\underline{f}$ is defined over $\mathbb{Z}_{(p)}$.

\textbf{Step 3. } Let $s\in L^\otimes_{\mathbb{Z}_{(p)}}$ (resp. $s'\in L'^\otimes_{\mathbb{Z}_{(p)}}$, $s_1\in L^\otimes_{1,\mathbb{Z}_{(p)}}$) be a tensor defining $\mathcal {G}\subseteq\mathrm{GL}(L_{\mathbb{Z}_{(p)}})$ (resp. $\mathcal {G}'\subseteq\mathrm{GL}(L'_{\mathbb{Z}_{(p)}})$, $\mathcal {G}\subseteq\mathrm{GL}(L_{1,\mathbb{Z}_{(p)}})$). Then $\mathcal {G}\times \mathcal {G}'\subseteq\mathrm{GL}(L_{1,\mathbb{Z}_{(p)}})$ is such that for all $\mathbb{Z}_{(p)}$-algebra $R$,
\begin{equation*}
\begin{split}
\mathcal {G}\times \mathcal {G}'(R)=\{g\in \mathrm{GL}&(L_{1,\mathbb{Z}_{(p)}})(R)\mid g(L\otimes R)=L\otimes R, g(L'\otimes R)=L'\otimes R\\
&\text{ and }g(s\otimes 1)=s\otimes 1, g(s'\otimes 1)=s'\otimes 1\}.
 \end{split}
 \end{equation*}
But then $\mathcal {G}$ is the group scheme such that for any $\mathbb{Z}_{(p)}$-algebra $R$,
\begin{equation*}
\begin{split}
\mathcal {G}(R)=&\{g\in \mathrm{GL}(L_{1,\mathbb{Z}_{(p)}})(R)\mid g(L\otimes R)=L\otimes R, g(L'\otimes R)=L'\otimes R\\
&\text{ and }g(s\otimes 1)=s\otimes 1, g(s'\otimes 1)=s'\otimes 1,g(s_1\otimes 1)=s_1\otimes 1\}.
 \end{split}
 \end{equation*}

\textbf{Step 4. }By our constructions in \ref{Comparing G_0-zips (II)}, the symplectic embeddings $i$, $i'$ and $i_1$ induce vector bundles $\V$, $\V'$ and $\V_1$ on $\ES_{0,K}(G,X)$.
The tensor $s$, $s'$ and $s_1$ induce tensors $\sdr\in \V$, $\sdr'\in \V'$ and $s_{1,\dr}\in \V_1$ respectively. Note that we have $\V_1=\V\oplus\V'$. Let $(I,I_+,I_-,\iota)$ be the $G_0$-zip on $\ES_{0,K}(G,X)$ constructed using $i$, and $(I_1,I_{1,+},I_{1,-},\iota_1)$ be the $G_0$-zip over $\ES_{0,K}(G,X)$ constructed using $i_1$. Then by Theorem \ref{inpe of symp}, $(I,I_+,I_-,\iota)\cong(I_1,I_{1,+},I_{1,-},\iota_1).$ We twist $(I_1,I_{1,+},I_{1,-},\iota_1)$ by $(G'_{0,\kappa}, \mu)$ using constructions at the beginning of \ref{The morphism alpha}, and get a $G'_0$-zip of type $\mu$ over $\ES_{0,K}(G,X)$ denoted by $(I'_1,I'_{1,+},I'_{1,-},\iota'_1)$.

\textbf{Step 5. }Let $(I',I'_+,I'_-,\iota')$ be the $G'_0$-zip of type $\mu'$ over $\ES_{0,K'}(G',X')$ constructed using $i'$. Let $(I',I'_+,I'_-,\iota')_\kappa$ be the pull back to $\ES_{0,K}(G,X)$ of $(I',I'_+,I'_-,\iota')$. The construction before Remark \ref{unique alpha 3}, $(I',I'_+,I'_-,\iota')_\kappa$ gives a $G'_0$-zip of type $\mu$ over $\ES_{0,K}(G,X)$ which will still be denoted by $(I',I'_+,I'_-,\iota')_\kappa$. Let $\epsilon:I'_1\rightarrow I'$ be the morphism given by restriction to $L^\vee_{2,\kappa}$. Then $\epsilon$ is an isomorphism of $G'_0$-torsors. By the proof of Theorem \ref{inpe of symp}, $\epsilon$ induces an isomorphism of $G'_0$-zips of type $\mu$. But this means that the diagram $$\xymatrix{
\ES_{0,K}(G,X)\ar[d]^{\zeta}\ar[r]^{f}& \ES_{0,K'}(G',X')_\kappa\ar[d]^{\zeta'\otimes\kappa} \\
G_0\mathrm{-zip}^\mu_\kappa\ar[r]^{\alpha} & G'_0\mathrm{-zip}^{\mu'}_{\kappa'}\otimes \kappa
}$$is commutative.
\end{proof}

\subsection[Basic examples]{Basic examples}

Here we give some basic examples where Assumption \ref{nece asp} is satisfied. 

\begin{example}Let $i:(G,X)\rightarrow (\mathrm{GSp}(V',\psi'),H')$ be a symplectic embedding. There exists a $\mathbb{Z}$-lattice $L'\subseteq V'$ such that the Zariski closure $\mathcal {G}$ of $G$ in $\mathrm{GL}(L'_{\mathbb{Z}_{(p)}})$ is reductive. The polarization $\psi'$ is not necessarily perfect pairing on $L'_{\mathbb{Z}_{(p)}}$. But by Zarhin's trick, we can take $L=(L'\oplus L'^\vee)^4$, then $\psi'$ induces a perfect paring on $L$ which will be denoted by $\psi$. Then the Zariski closure of $G$ in $\mathrm{GL}(L_{\mathbb{Z}_{(p)}})$ lies in $\mathrm{GSp}(L_{\mathbb{Z}_{(p)}},\psi)$, and hence there is an embedding $\mathcal {G}\hookrightarrow \mathrm{GSp}(L_{\mathbb{Z}_{(p)}},\psi)$. So there is a commutative diagram $$\xymatrix{
\ES_{0,K}(G,X)\ar[d]^{\zeta}\ar[r]& \ES_{0,K'}(\mathrm{GSp}(L_{\mathbb{Q}},\psi),H)_\kappa\ar[d]^{\zeta'\otimes\kappa} \\
G_0\mathrm{-zip}^\mu_\kappa\ar[r]& \mathrm{GSp}(L_{\mathbb{F}_p},\psi)\mathrm{-zip}^{\mu'}_{\mathbb{F}_p}\otimes \kappa.
}$$
\end{example}
\begin{example}
Let $(G,X)$ be a Shimura datum of PEL type with good reduction at $p$. Let $i:(G,X)\rightarrow (\mathrm{GSp}(V,\psi),H)$ be the tautological symplectic embedding. Then there is again a commutative diagram $$\xymatrix{
\ES_{0,K}(G,X)\ar[d]^{\zeta}\ar[r]& \ES_{0,K'}(\mathrm{GSp}(L_{\mathbb{Q}},\psi),H)_\kappa\ar[d]^{\zeta'\otimes\kappa} \\
G_0\mathrm{-zip}^\mu_\kappa\ar[r]& \mathrm{GSp}(L_{\mathbb{F}_p},\psi)\mathrm{-zip}^{\mu'}_{\mathbb{F}_p}\otimes \kappa.
}$$
\end{example}

\newpage
\addcontentsline{toc}{section}{References}

\

\

Email: czhang\texttt{@}math.leidenuniv.nl \emph{or} zhangchao1217\texttt{@}gmail.com

\begin{thebibliography}{Gro}
\bibitem{varideshi}Deligne, P.: \emph{Vari\`{e}t\'{e}s de Shimura: interpr\'{e}tation
modulaire, et techniques de construction de mod\`{e}les canoniques},
Automorphic forms, representations and $L$-functions, Proc. Sympos.
Pure math. XXXIII, PP. 247-289, Amer. Math. Soc., 1979.
\bibitem{CIMK}Kisin, M.: \emph{Integral models for Shimura varieties of abelian
type}, J. Amer. Math. Soc. 23, pp. 967-1012, 2010.
\bibitem{LRKisin}Kisin, M.: \emph{Mod $p$ points on Shimura varieties of abelian type}, preprint, avaiable online at
``http://www.math.harvard.edu/$\sim$kisin/dvifiles/lr.pdf''.
\bibitem{stackL}Laumon, G.; Moret-Bailly, L.: \emph{Champs alg\'{e}briques}, Ergebnisse der
Mathematik 39, Springer-Verlag, 2000.
\bibitem{introshv}Milne, J.: \emph{Introduction to Shimura
varieties}, Harmonic analysis, the trace formula, and Shimura
varieties, Clay Math. Proc. 4, pp. 265¨C378, Amer. Math. Soc., 2005.
\bibitem{disinv}Moonen, B; Wedhorn. T.: \emph{Discrete invariants of varieties in positive
characteristic}, Int. Math. Res. Not. 72, 3855-3903, 2004.
\bibitem{zipdata}Pink, R; Wedhorn, T; Ziegler, P.: \emph{Algebraic zip
data}, Documenta Math. 16, pp. 253-300, 2011.
\bibitem{zipaddi}Pink, R; Wedhorn, T; Ziegler, P.: \emph{$F$-zips with additional
structure}, arXiv:1208.3547, 2012.
\bibitem{INTV}Vasiu, A.: \emph{Integral canonical models of Shimura varieties of preabelian type},
Asian J. Math. 3 (1999), no. 2, 401-517
\bibitem{VW}Viehmann, E; Wedhorn, T.: \emph{Ekedahl-Oort and Newton strata for Shimura varieties of PEL
type}, Math. Ann. 356, pp. 1493-1550, 2013.
\bibitem{deRhamw}Wedhorn, T.: \emph{De Rham cohomology of varieties over fields of positive
characteristic}, Higher-dimensional geometry over finite fields, pp.
269-314, IOS Press, 2008.
\bibitem{BruhatandFzip}Wedhorn, T.: \emph{Bruhat strata and F-zips with additional structures},
available at http://arxiv.org/pdf/1302.6715.pdf.
\bibitem{muordinary}Wortmann, D.: \emph{The $\mu$-ordinary locus for Shimura varieties of Hodge type},
available at http://arxiv.org/pdf/1310.6444.pdf.
\bibitem{EOZ}Zhang, C.: \emph{Ekedahl-Oort strata for Shimura varieties of Hodge type}, arXiv:1312.4869, 2013.
\end{thebibliography}
\end{document}